  \def\@thmcountersep{.}
\newtheorem{theo}{Theorem}[section]
\newtheorem{lem}[theo]{Lemma}
\newtheorem{cor}[theo]{Corollary}
\newtheorem{rem}[theo]{Remark}
\newtheorem{fact}[theo]{Fact}
\newcommand{\ol}{\overline}
\newcommand{\mc}{\mathcal}
\newcommand{\mb}{\mathbb}
\newcommand{\Honezero}{H^1_0(\Omega)}
\newcommand{\Ltwo}{L^{2}(\Omega)}
\newcommand{\Lq}{L^{q}(\Omega)}
\newcommand{\ra}{\rightarrow}
\newcommand{\f}{\frac}
\newcommand{\supp}{\text{\rm supp\,}}
\newcommand{\Lpnorm}[2]{\left\|#1\right\|_{L^{p}(#2)}}
\newcommand{\Lqnorm}[2]{\left\|#1\right\|_{L^{q}(#2)}}
\newcommand{\usupp}{\supp u_{-} }
\newcommand{\vDomain}{D(v_c)}
\newcommand{\Omegam}{D(m)}
\newcommand{\lionsA}{A}
\newcommand{\lionsB}{B}
\newcommand{\ball}{\overline{B}(\hat{u},\rho)}
\newcommand{\oballrho}{B(\hat{u},\rho)}
\newcommand{\ten}[1]{\times 10^{#1}}
\newcommand{\argmax}{\mathop{\rm arg~max}\limits}
\newcommand{\maximize}{\mathop{\rm maximize}\limits}
\newcommand{\opD}[1]{\mathring{D}(#1)}
\newcommand{\lowerboundoflambda}{\underline{\lambda}}
\begin{document}
\title{A posteriori verification of the positivity of solutions to elliptic boundary value problems
\thanks{This work is supported by JSPS KAKENHI Grant Number 19K14601 and JST CREST Grant Number JPMJCR14D4.
All data generated or analyzed during this study are included in this published article.
}}
\author{Kazuaki Tanaka \and Taisei Asai}
\institute{K. Tanaka \at
              Institute for Mathematical Science, Waseda University, 3-4-1, Okubo, Shinjuku-ku, Tokyo 169-8555, Japan \\
              Tel.: +81-3-5286-2923
              \email{tanaka@ims.sci.waseda.ac.jp}           
           \and
           T. Asai \at
              Graduate School of Fundamental Science and Engineering, Waseda University, 3-4-1 Okubo, Shinjuku-ku, Tokyo 169-8555, Japan
}
\date{Received: date / Accepted: date}
\maketitle
\begin{abstract}
			The purpose of this paper is to develop a unified a posteriori method for verifying the positivity of solutions of elliptic boundary value problems by assuming neither $H^2$-regularity nor $ L^{\infty} $-error estimation, but only $ H^1_0 $-error estimation.
			In [J. Comput. Appl. Math, Vol. 370, (2020) 112647],
			we proposed two approaches to verify the positivity of solutions of several semilinear elliptic boundary value problems.
			However, some cases require $ L^{\infty} $-error estimation and, therefore, narrow applicability.
			In this paper, we extend one of the approaches and combine it with a priori error bounds for Laplacian eigenvalues to obtain a unified method that has wide application.
			We describe how to evaluate some constants required to verify the positivity of desired solutions.
			We apply our method to several problems, including those to which the previous method is not applicable.
			
%

\keywords{
			Computer-assisted proofs \and
			Elliptic boundary value problems \and
			Error bounds \and
			Numerical verification \and
			Positive solutions
		}
 \subclass{35J25 \and 35J61 \and 65N15}
\end{abstract}
	\section{Introduction}\label{sec:1}
	In recent decades, 
	numerical verification (also known as computer-assisted proof, validated numerics, or verified numerical computation) has been developed and applied to various partial differential equations, including those where purely analytical methods have failed (see, for example, \cite{plum1992explicit,day2007validated,plum2008,mckenna2009uniqueness,nakao2011numerical,mckenna2012computer,takayasu2013verified,nakaoplumwatanabe2019numerical,tanaka2020numerical} and the references therein).
	One such successful application is to the semilinear elliptic boundary value problem
	\begin{align}
		\label{eq:mainpro}
		\left\{
		\begin{array}{ll}
		-\Delta u=f(u)&\text{in}~\Omega,\\
		u=0&\text{on}~\partial\Omega,
		\end{array}
		\right.
	\end{align}
	where $\Omega \subset \mathbb{R}^{N}$~$(N=2,3,\cdots)$ is a given bounded domain,
	$\Delta $ is the Laplacian,
	and $ f: \mb{R} \ra \mb{R}$ is a given nonlinear map.
	Further regularity assumptions for $\Omega$ and $f$ will be shown later for our setting.
	
	Positive solutions of \eqref{eq:mainpro} have attracted significant attention \cite{lions1982existence,gidas1979symmetry,lin1994uniqueness,damascelli1999qualitative,gladiali2011bifurcation,de2019morse}.
	For example, positive solutions of problem \eqref{eq:mainpro} with $ f(t) = \lambda t + t|t|^{p-1} $, $ \lambda \in [0,\lambda_1(\Omega)) $, $p\in (1,p^*)$ have been investigated from various points of view such as uniqueness, multiplicity, nondegeneracy, and symmetry, \cite{gidas1979symmetry,lin1994uniqueness,damascelli1999qualitative,gladiali2011bifurcation,de2019morse},
	where $p^*=\infty$ when $N=2$ and $p^*=(N+2)/(N-2)$ when $ N\geq3 $;
	$ \lambda_1(\Omega) $ is the first eigenvalue of $ -\Delta $ with the homogeneous Dirichlet boundary value condition in the weak sense.
	Another important nonlinearity is $ f(t)=\lambda (t-t^3) $, $ \lambda>0 $.
	This corresponds to the stationary problem of the Allen–Cahn equation \cite{allen1979microscopic}.
	The problem \eqref{eq:mainpro} with this nonlinearity may have a positive solution when $ \lambda \geq \lambda_1(\Omega)$.
	However, when $ \lambda < \lambda_1(\Omega)$, no positive solution is admitted;
	this can be confirmed by multiplying $-\Delta u = \lambda(u-u^3)$ with the first eigenfunction of $-\Delta$ and integrating both sides.
	The Allen–Cahn equation is a special case of the Nagumo equation \cite{mckean1970nagumo} with the nonlinearity $ f(t)=\lambda t(1-t)(t-a) $, $ \lambda>0 $ and $ 0<a<1$, and both of these equations have been investigated by many researchers.
	We are moreover interested in the related case in which $ f(t)=\lambda (t+\lionsA t^2 - \lionsB t^3) $ with $ \lionsA,\lionsB>0 $.
	The bifurcation of problem \eqref{eq:mainpro} with this nonlinearity was analyzed in \cite{lions1982existence}.
	This problem has two positive solutions when $ \lambda^*< \lambda < \lambda_1(\Omega)$ for some $\lambda^*>0$.
	Despite these results, quantitative information about the positive solutions, such as their shape, has not been clarified analytically.
	Throughout this paper, $ H^k(\Omega) $ denotes the $k$th order $ L^2 $ Sobolev space.
	We define $ \Honezero := \{ u \in H^1(\Omega) : u = 0~\mbox{on} ~ \partial \Omega\} $, with the inner product $(u, v)_{H^1_0}:=(\nabla u, \nabla v)_{L^2}$ and norm $\| u \|_{H^1_0} := \sqrt{(u, u)_{\smash{H^1_0}}}$.
	
	Numerical verification methods enable us to obtain an explicit ball containing exact solutions of \eqref{eq:mainpro}.
	More precisely, for a numerical approximation $ \hat{u} \in \Honezero $ that satisfies the assumption required by such methods, they prove the existence of an exact solution $ u \in H^1_0(\Omega) $ of \eqref{eq:mainpro} that satisfies
	\begin{align}
		\label{eq:h10error}
		\left\|u-\hat{u}\right\|_{H_{0}^{1}} \leq \rho
	\end{align}
	with an explicit error bound $ \rho>0 $.
	Under an appropriate condition, we can obtain an $ L^{\infty} $-estimation
	\begin{align}
		\label{eq:linferror}
		\left\|u-\hat{u}\right\|_{L^{\infty}} \leq \sigma
	\end{align}
	with bound $ \sigma>0 $.
	For instance, when $ u,\hat{u} \in H^2(\Omega) $, we can evaluate the $ L^{\infty} $-bound $ \sigma>0 $ by considering the embedding $ H^2(\Omega) \hookrightarrow L^{\infty}(\Omega) $ \cite[Theorem 1 and Corollary 1]{plum1992explicit}.
	Thus, these approaches have the advantage that quantitative information about a target solution is provided accurately in a strict mathematical sense.
	We can identify the approximate shape of solutions from the error estimates.
	Despite these advantages, information about the positivity of solutions is not guaranteed without further considerations, irrespective of how small the error bound ($\rho$ or $\sigma$) is.
	In the homogeneous Dirichlet case \eqref{eq:mainpro}, it is possible for a solution that is verified by such methods to be negative near the boundary $\partial\Omega$.
	
	Therefore, we developed methods of verifying the positivity of solutions of \eqref{eq:mainpro} in previous studies \cite{tanaka2015numerical,tanaka2017numerical,tanaka2017sharp,tanaka2020numerical} and applied these result to sing-changing solutions \cite{tanaka2021posteriori}.
	These methods succeeded in verifying the existence of positive solutions by checking simple conditions.
	In \cite{tanaka2015numerical,tanaka2017numerical,tanaka2017sharp}, we proposed methods for verifying the positivity of solutions of \eqref{eq:mainpro} by assuming both error estimates \eqref{eq:h10error} and \eqref{eq:linferror}.
	Subsequently, in \cite{tanaka2020numerical}, we extended our method to a union of two different approaches \cite[Theorems 2.1 and 3.2]{tanaka2020numerical} under certain conditions for nonlinearity $ f $.
	Table \ref{table:applicable} summarizes the error-estimate types that are required by these theorems when $ f $ is a subcritical polynomial
	\begin{align}
	f(t) = \lambda t + \sum_{i=2}^{n(<p^*)}a_{i}t|t|^{i-1},~~\lambda,~a_i \in \mb{R},~a_i \neq 0 \text{~~for~some~~} i. \label{eq:polyf}
	\end{align}
	Theorem 2.1 in \cite{tanaka2020numerical} can be applied to cases in which $\lambda <\lambda_1(\Omega)$.
	This theorem is based on the constructive norm estimation for the minus part $ u_{-}:=\max\{-u,0\} $ of a solution $ u $, and does not assume an $ L^{\infty} $-estimation \eqref{eq:linferror} but only requires an $ H^1_0 $-error estimation \eqref{eq:h10error}. 
	When $a_i \leq 0$ for all $ i $ and $\lambda \geq \lambda_1(\Omega)$, we used a completely different approach \cite[Theorem 3.2]{tanaka2020numerical} that was based on the Newton iteration that retains nonnegativity.
	This theorem needs no $ L^{\infty} $-estimation but requires an explicit evaluation of the minimal eigenvalue of a certain linearized operator around approximation $ \hat{u} $.
	Actually, this eigenvalue evaluation itself is not trivial to obtain.
	In \cite{tanaka2020numerical}, the eigenvalue was estimated using the existing method \cite{tanaka2020numerical,liu2015framework} based on Galerkin approximations.
	Theorems 2.1 and 3.2 in \cite{tanaka2020numerical} were applied numerically to problem \eqref{eq:mainpro} with the above-mentioned nonlinearities: $ f(t) = \lambda t + t|t|^{p-1} $ and $ f(t) = \lambda (t-t^3) $.	
	However, a problem still remains in the sense that we need the $ L^{\infty} $-error estimation \eqref{eq:linferror} to prove positivity when $a_i a_j < 0$ for some $ i,j $ and $\lambda \geq \lambda_1(\Omega)$.
	For example, the nonlinearity $ f(t)=\lambda (t+\lionsA t^2 - \lionsB t^3) $ in which we are interested requires such an estimation.
	These requirements may narrow the applicability of the methods because the existing approach provided in \cite[Theorem 1 and Corollary 1]{plum1992explicit} requires $ u,\hat{u} \in H^2(\Omega) $, evaluating the bound for the embedding $ H^2(\Omega) \hookrightarrow L^{\infty}(\Omega) $,
	to obtain $ \sigma $ from $ \rho $.
	The $H^2$-regularity of $u$ may fall when $\Omega$ is a nonconvex polygonal domain.
	
	\renewcommand{\arraystretch}{1.3}
	\begin{table}[h]
		\label{table:applicable}
		\caption{Error estimates required in \cite{tanaka2020numerical} when $ f $ is the subcritical polynomial \eqref{eq:polyf}.}
		\begin{center}
			\begin{tabular}{lll}
			\toprule
				$a_i$&$\lambda$&\\
				\cmidrule(r){2-3}
				&$\geq \lambda_1(\Omega)$&$<\lambda_1(\Omega)$\\
				\midrule
				$a_i \geq 0$ for all $ i $ &No positive solution& \eqref{eq:h10error}, \cite[Theo. 2.1]{tanaka2020numerical}\\
				\midrule
				$a_i \leq 0$ for all $ i $ &\eqref{eq:h10error}, \cite[Theo. 3.2]{tanaka2020numerical}& No positive solution\\
				\midrule
				$a_i a_j < 0$ for some $ i,j $ &\eqref{eq:h10error} and \eqref{eq:linferror}, \cite[Cor. A.1]{tanaka2020numerical}& \eqref{eq:h10error}, \cite[Theo. 2.1]{tanaka2020numerical}\\
				\bottomrule
			\end{tabular}
		\end{center}
	\end{table}
	\renewcommand{\arraystretch}{1}
		 
	 The purpose of this paper is to develop a unified method for verifying the positivity of solutions $ u $ of \eqref{eq:mainpro} by assuming neither $H^2$-regularity nor \eqref{eq:linferror}, but only $ H^1_0 $-error estimation \eqref{eq:h10error},
	 which can be applied to all the cases in Table \ref{table:applicable} for arbitrary bounded domains $\Omega$.
	 Our method is based on a posteriori constructive norm estimation for the minus part $ u_{-} $ and can be regarded as an extension of \cite[Theorem 2.1]{tanaka2020numerical}.
	 In short, we confirm that the norm of $ u_{-} $ vanishes by checking certain inequalities while assuming \eqref{eq:h10error} (see Lemma \ref{theo1}).
	 One of the key points is to estimate lower bounds of eigenvalue $\lambda_{1}(\usupp)$ explicitly because the inequality $\lambda_1(\usupp) \geq \lambda$ has to be confirmed for the success of our positivity proof (again, see Lemma \ref{theo1}).
	 Here, $\usupp := \overline{\{x \in \Omega : u_{-}(x) \neq 0\}}$ is the support of $u_{-}$, and $\lambda_1(\usupp)$ is understood as the first eigenvalue on the interior of $\usupp$.
	 When the interior of $\usupp$ is empty, we interpret $\lambda_{1}(\usupp)=\infty$, and all real numbers are lower bounds of $\lambda_{1}(\usupp)$.
	 The difficulty is that we cannot identify the location and shape of $ \usupp $ from \eqref{eq:h10error} even when $\hat{u}$ is nonnegative.
	 
	If a polygon or polyhedron $S$ enclosing $ \usupp $ is obtained concretely,
	we can apply the Liu–Oishi method \cite{liu2013verified,liu2015framework} based on finite element methods to obtain a lower bound for $\lambda_{1}(S)$.
	Then, the inequality $\lambda_{1}(\usupp) \geq \lambda_{1}(S)$ gives the desired lower bound of $\lambda_{1}(\usupp)$.
	Such a supremum set $S$ over $ \usupp $ can be obtained when we have an $ L^{\infty} $-estimation \eqref{eq:linferror}.
	By setting $\Omega_{+}:=\{x\in\Omega : \hat{u}-\sigma\geq 0\}$ where $ u \geq 0 $ therein,
	we can construct such a domain $S$ as a supremum set of $\Omega\backslash\Omega_{+}$.
	Again, we cannot determine such a supremum set $S$ only from $ H^1_0 $-error estimation \eqref{eq:h10error}.
	The Temple–Lehmann–Goerisch method can help us to evaluate $\lambda_1(\usupp)$ more accurately (see, for example, \cite[Theorem 10.31]{nakaoplumwatanabe2019numerical}).
	
	To estimate the lower bound of $\lambda_{1}(\usupp)$, we rely on the following argument:
	\begin{fact}
	\label{fact:lowerbound}
		For a bounded domain $\Omega \subset \mathbb{R}^{N}$ $(N=2,3,\cdots)$,
		there exists a constant $A_{k,N}$ independent of $\Omega$ such that
		\begin{align}
		\lambda_{k}(\Omega) \geq A_{k,N} \left(\frac{1}{|\Omega|}\right)^{\frac{2}{N}},
		\label{ineq:Akn}
		\end{align}
		where $ \lambda_{k}(\Omega) $ denotes the $ k $-th eigenvalue of $-\Delta$ with the homogeneous Dirichlet boundary condition.
	\end{fact}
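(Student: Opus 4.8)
The plan is to reduce the claim to a Li--Yau-type lower bound for the \emph{sum} of the first $k$ Dirichlet eigenvalues. First, since $\Omega$ is bounded, the embedding $\Honezero\hookrightarrow\Ltwo$ is compact, so $-\Delta$ with homogeneous Dirichlet boundary condition has discrete spectrum $0<\lambda_{1}(\Omega)\le\lambda_{2}(\Omega)\le\cdots\to\infty$, realized by an $L^{2}$-orthonormal system of (real-valued) eigenfunctions $\{\phi_{j}\}_{j\ge1}\subset\Honezero$. I would then prove
\[
	\sum_{j=1}^{k}\lambda_{j}(\Omega)\ \ge\ \f{4\pi^{2}N}{N+2}\,\f{k^{\,1+\f{2}{N}}}{\bigl(\omega_{N}\,|\Omega|\bigr)^{\f{2}{N}}},\qquad \omega_{N}:=\f{\pi^{N/2}}{\Gamma(\tfrac N2+1)}=|B_{1}|,
\]
and combine it with the monotonicity $\lambda_{1}(\Omega)\le\cdots\le\lambda_{k}(\Omega)$, which gives $\lambda_{k}(\Omega)\ge\tfrac1k\sum_{j=1}^{k}\lambda_{j}(\Omega)$; dividing the displayed inequality by $k$ then yields \eqref{ineq:Akn} with the domain-independent constant $A_{k,N}=\f{4\pi^{2}N}{N+2}\bigl(k/\omega_{N}\bigr)^{2/N}$.

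For the sum bound I would pass to the Fourier side. Extend each $\phi_{j}$ by $0$ to $\mb{R}^{N}$; since the extension has compact support it lies in $H^{1}(\mb{R}^{N})\cap L^{1}(\mb{R}^{N})$, so its Fourier transform $\hat\phi_{j}$ (unitary normalization) is a genuine continuous function. Set $\rho(\xi):=\sum_{j=1}^{k}|\hat\phi_{j}(\xi)|^{2}$. Plancherel together with orthonormality gives $\int_{\mb{R}^{N}}\rho=k$; Plancherel applied to $\nabla\phi_{j}$ together with $\|\nabla\phi_{j}\|_{L^{2}}^{2}=\lambda_{j}(\Omega)$ gives $\int_{\mb{R}^{N}}|\xi|^{2}\rho(\xi)\,d\xi=\sum_{j=1}^{k}\lambda_{j}(\Omega)$; and Bessel's inequality for the orthonormal family $\{\phi_{j}\}_{j=1}^{k}$ applied to $g_{\xi}(x):=(2\pi)^{-N/2}e^{ix\cdot\xi}\mathbf 1_{\Omega}(x)$, whose $L^{2}$-norm squared equals $(2\pi)^{-N}|\Omega|$ and whose inner product with $\phi_{j}$ has modulus $|\hat\phi_{j}(\xi)|$, gives the pointwise bound $0\le\rho(\xi)\le M:=(2\pi)^{-N}|\Omega|$ for every $\xi$.

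It then remains to minimize $\int_{\mb{R}^{N}}|\xi|^{2}\varphi(\xi)\,d\xi$ over measurable $\varphi:\mb{R}^{N}\to[0,M]$ with $\int\varphi=k$. Because $|\xi|^{2}$ is radially increasing, the bathtub principle identifies the minimizer as $\varphi^{*}=M\mathbf 1_{B_{R}}$ with $R$ fixed by $M\omega_{N}R^{N}=k$; using $\int_{B_{R}}|\xi|^{2}\,d\xi=\tfrac{N}{N+2}\omega_{N}R^{N+2}$ one gets $\int|\xi|^{2}\varphi^{*}=\tfrac{N}{N+2}kR^{2}$ with $R^{2}=(k/(M\omega_{N}))^{2/N}$. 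Applying $\int|\xi|^{2}\rho\ge\int|\xi|^{2}\varphi^{*}$ and substituting $M=(2\pi)^{-N}|\Omega|$ produces the displayed sum bound, which finishes the argument.

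The only real obstacle is technical care in the \emph{pointwise} density bound $\rho\le M$ --- this is why one records $\phi_{j}\in L^{1}$, ensuring $\hat\phi_{j}$ is a true continuous function so that the inner-product representation of $\hat\phi_{j}(\xi)$ holds at every $\xi$ --- and in verifying admissibility and optimality of the bathtub minimizer; the rest is bookkeeping. Note the argument uses only $|\Omega|<\infty$ and membership in $\Honezero$, with no regularity of $\partial\Omega$, exactly as needed for an arbitrary bounded domain. If one only wants \emph{some} admissible constant and is willing to give up the sharp factor $N/(N+2)$, a shorter route uses the heat-kernel domination $p_{t}^{\Omega}(x,x)\le(4\pi t)^{-N/2}$: then $k\,e^{-\lambda_{k}(\Omega)t}\le\sum_{j\le k}e^{-\lambda_{j}(\Omega)t}\le(4\pi t)^{-N/2}|\Omega|$ for all $t>0$, and optimizing over $t$ gives \eqref{ineq:Akn} with $A_{k,N}=\tfrac{2\pi N}{e}k^{2/N}$.
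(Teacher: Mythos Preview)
Your argument is correct: the Fourier/bathtub derivation of the Li--Yau sum inequality is carried out cleanly, and the passage from $\sum_{j\le k}\lambda_j$ to $\lambda_k$ via monotonicity gives exactly the constant $A_{k,N}=\tfrac{4\pi^{2}N}{N+2}\,(k/\omega_N)^{2/N}$ stated in the paper's Remark~\ref{rem:li}. The technical points you flag (continuity of $\hat\phi_j$ via $\phi_j\in L^1$, pointwise Bessel bound, bathtub optimality) are handled properly, and no boundary regularity is used.

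As for comparison: the paper does not actually \emph{prove} Fact~\ref{fact:lowerbound}. It is recorded as a known result and attributed to the literature, with the Rayleigh--Faber--Krahn inequality invoked for $k=1$ (Theorem~\ref{theo:RFK}, giving the sharper constant $A_{1,N}=B_N^{2/N}j_{N/2-1,1}^2$) and the Li--Yau estimate cited for general $k$. Your write-up is thus a self-contained reproduction of the Li--Yau proof that the paper merely cites; the alternative heat-kernel route you sketch at the end is also valid but yields a weaker constant. If you wanted to match the paper's primary choice for $k=1$, you would instead appeal to Schwarz symmetrization and the explicit first eigenvalue of the ball, which gives a strictly better $A_{1,N}$ than Li--Yau does.
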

	Many articles have investigated this type of inequality in several forms.
	Among them, we mainly use the Rayleigh–Faber–Krahn inequality, which ensures Fact \ref{fact:lowerbound} for $k=1$ \cite{faber1923bweis,krahn1925uber,krahn1926uber}. This inequality states that if $|\Omega| = |\Omega^*|$ 
	($\Omega^*$ is a ball in $\mathbb{R}^{N}$), then $\lambda_{1}(\Omega) \geq \lambda_{1}(\Omega^*)$, where the equality holds if and only if $\Omega=\Omega^*$.
	We also refer to \cite{li1983schrodinger} for an easy-to-estimate formula for $A_{k,N}$ for all $ k \geq 1 $ (see Remark \ref{rem:li}).
	Section \ref{sec:lowerbound} provides explicit lower bounds of $A_{k,N}$ based on these results.
	To estimate a lower bound of $\lambda_1(\usupp)$ using the inequality \eqref{ineq:Akn}, we focus on estimating upper bounds of $|\usupp|$ while assuming only the $ H^1_0 $-error estimation \eqref{eq:h10error} without knowing the specific shape and location of $\usupp$.
	Suppose that \eqref{eq:h10error} is proved for a positive approximation $\hat{u}$ with sufficient accuracy.
	Then, the upper bound for $|\usupp|$ can be estimated very small using Lemma \ref{lem:dm} provided later,
	and therefore, $\lambda_1(\usupp) \geq \lambda$ can be confirmed using \eqref{ineq:Akn} for a moderately large $\lambda$.
	The established estimation for $|\usupp|$ is used to evaluate not only $\lambda_{1}(\usupp)$ but also some Sobolev embedding constants on $|\usupp|$,
	which play an essential role for our positivity proof (again, see Lemma \ref{theo1})
	 
	 	 The remainder of this paper is organized as follows.
	 Section \ref{sec:pre} introduces required notation and definitions.
	 In Section \ref{sec:volminuspart}, we evaluate an upper bound for the volume $ |\usupp| $ assuming the $H^1_0$-estimation \eqref{eq:h10error} for a continuous or piecewise continuous approximation $\hat{u} \in H^1_0(\Omega)$.
	 Subsequently, we use the bound for $ |\usupp| $ to evaluate lower bounds for $ \lambda_1(\usupp) $ in Section \ref{sec:lowerbound}.
	 In Section \ref{sec:embedding}, required Sobolev embedding constants on bounded domains are evaluated.
	 In Section \ref{sec:positive}, we extend the previous formula \cite[Theorem 2.1]{tanaka2020numerical} and combine it with the estimates derived from Sections \ref{sec:lowerbound} and \ref{sec:embedding}, thereby designing a unified method for proving positivity.
	 Finally, Section \ref{sec:ex} presents numerical examples where the proposed method is applied to problem \eqref{eq:mainpro} with several nonlinearities, including those to which the previous method is not applicable without an $ L^{\infty} $-error estimation.
	 \section{Preliminaries}\label{sec:pre}
	We begin by introducing required notation.
	We denote by $ H^{-1}$ the topological dual of $ \Honezero $.
	When describing norms and inner products, we may omit the domain of a function space unless there is a risk of misunderstanding.
	For example, we simply write $\|\cdot\|_{L^p} = \|\cdot\|_{L^p(\Omega)}$ if no confusion arises.
	For two Banach spaces $X$ and $Y$, the set of bounded linear operators from $X$ to $Y$ is denoted by ${\mc L}(X, Y)$ with the usual supremum norm $\| T \|_{{\mc L}(X, Y)} := \sup\{ \| T u \|_{Y} / \| u \|_{X} : {u \in X \setminus \{0\}}\}  $ for $T \in {\mc L}(X, Y)$.
	The norm bound for the embedding $\Honezero \hookrightarrow L^{p+1}\left(\Omega\right)$ is denoted by $C_{p+1}(=C_{p+1}(\Omega))$; that is, $C_{p+1}$ is a positive number that satisfies
	\begin{align}
	\label{embedding}
	\left\|u\right\|_{L^{p+1}(\Omega)}\leq C_{p+1}\left\|u\right\|_{\Honezero}~~~{\rm for~all}~u\in \Honezero,
	\end{align}
	where $p\in [1,\infty)$ when $N=2$ and $p\in [1,p^*]$ when $ N\geq3 $.
	If no confusion arises, we use the notation $ C_{p+1} $ to represent the embedding constant on the entire domain $ \Omega $,
	whereas, in some parts of this paper, we need to consider an embedding constant on some subdomain $ \Omega'\subset\Omega $.
	This is denoted by $ C_{p+1}(\Omega') $ to avoid confusion.
	Moreover, $ \lambda_1(\Omega) $ denotes the first eigenvalue of $ -\Delta $ imposed on the homogeneous Dirichlet boundary condition.
	This is characterized by
	\begin{align}
	\label{eq:defi-diri-eigenvalue}
	\lambda_{1}(\Omega) = \inf_{v\in \Honezero^\backslash{\{0\}}} \frac{\|v\|_{\Honezero}^2}{\|v\|_{L^2(\Omega)}^2}.
	\end{align}		
	Throughout this paper, we assume that $ f $ is a $ C^1 $ function that satisfies
	\begin{align}
	&|f(t)| \leq a_0 |t|^p + b_0 \text{~~~for~all~~} t \in \mb{R},\label{asuume:f}\\
	&|f'(t)| \leq a_1 |t|^{p-1} + b_1 \text{~~~for~all~~} t \in \mb{R}\label{asuume:df}
	\end{align}
	for some $ a_0,a_1,b_0,b_1\geq 0 $ and $p\in [1,p^*)$.
	We define the operator $ F $ as
	\begin{align*}
	F : \left\{\begin{array}{ccc}{u(\cdot)} & {\mapsto} & {f(u(\cdot))}, \\
	{\Honezero} & {\rightarrow} & {H^{-1}}.\end{array}\right.
	\end{align*}
	Moreover, we define another operator $ \mathcal{F} : \Honezero \rightarrow H^{-1}$ as $\mathcal{F}(u) :=-\Delta u-F(u) $, which is characterized by
	\begin{align}
	\left<\mathcal F(u),v\right> = \left(\nabla u,\nabla v\right)_{L^2} - \left<F(u),v\right>  \text{~~for~all~~} u,v \in \Honezero,
	\end{align}
	where $\left<F(u),v\right> = \int_{\Omega} f(u(x)) v(x) dx$.
	The Fr\'echet derivatives of $ F $ and $ \mc{F} $ at $ \varphi \in \Honezero $, denoted by $ {F'_{\varphi}} $ and $ {\mc F'_{\varphi}} $, respectively, are given by
	\begin{align}
	&\langle F'_{\varphi}u,v\rangle = \int_{\Omega} f'(\varphi(x))u(x) v(x) dx \text{~~for~all~~} u,v \in \Honezero,\\
	&\langle \mathcal F'_{\varphi}u,v \rangle = \left(\nabla u,\nabla v\right)_{L^2} - \langle F'_{\varphi}u,v \rangle  \text{~~for~all~~} u,v \in \Honezero. \label{def:derivativecalf}
	\end{align}
	Under the notation and assumptions, we look for solutions $ u \in \Honezero $ of 
	\begin{align}
	\label{main:fpro}
	\mathcal{F}(u)=0,
	\end{align}
	which corresponds to the weak form of \eqref{eq:mainpro}.
	We assume that some verification method succeeds in proving the existence of a solution $u \in \Honezero $ of \eqref{main:fpro} satisfying inequality \eqref{eq:h10error}
	given $ \hat{u} \in \Honezero $ and $ \rho>0 $.
	Although the regularity assumption for $ \hat{u} $ (to be in $ \Honezero $) is sufficient to obtain the error bound \eqref{eq:h10error} in theory,
	we further assume that $ \hat{u} $ is continuous or piecewise continuous throughout this paper.
	This assumption impairs little of the flexibility of actual numerical computation methods.
	We recall $ u_{-} = \max\{-u,0\} $, and define $ u_{+} := \max\{u,0\} $.
	\section{Evaluation of the volume of $ \usupp $}
	\label{sec:volminuspart}
	To estimate an upper bound for $ |\usupp| $ from the information of the inclusion \eqref{eq:h10error}, we define $ D(v):=\{ x \in \Omega : \hat{u}(x)\leq v(x)\}$ and consider the maximization problem
	\begin{align}
		\label{problem:Dopt}
		\maximize_{\Lqnorm{v}{\Omega}= c} |D(v)|
	\end{align}
	for fixed $ q \in (1,\infty) $ and $ c>0 $.
	This maximization takes place over the set of all functions $ v \in \Lq $ satisfying $ \Lqnorm{v}{\Omega}= c $.
	When $ \Lqnorm{\hat{u}_{+}}{\Omega} \leq c $, the maximal value of the problem \eqref{problem:Dopt} is $ |\Omega| $.
	Therefore, we consider the case where $ \Lqnorm{\hat{u}_{+}}{\Omega} > c $.
	In the following, we denote $ D(l) := \{ x \in \Omega : \hat{u}(x)\leq l\} $ and $ \mathring{D}(l) := \{ x \in \Omega : \hat{u}(x)< l\} $ for $l \in \mathbb{R}$.
	
	\begin{lem}
		\label{lem:volumeofv}
		Let $ q \in (1,\infty) $ and $ c>0 $ be fixed.
		Suppose that $ \Lqnorm{\hat{u}_{+}}{\Omega} > c $.
		Then, we have
		\begin{align}
		\argmax_{\Lqnorm{v}{\Omega}= c} |D(v)|
		=
		\left\{
		\begin{array}{ll}
		\hat{u}_+(x),&x\in D,\\
		0,&\text{\rm otherwise},
		\end{array}
		\right.		
		\end{align}
		where $ D $ is a set that satisfies
		$\Lqnorm{\hat{u}_{+}}{D}= c$ and
		\begin{align}
			\label{eq:Dandm}
			\opD{l} \subseteq D \subseteq D(l)
		\end{align}
		for some $ l \in \mb{R} $.
		The maximal value of the problem \eqref{problem:Dopt} is $|D|$.
	\end{lem}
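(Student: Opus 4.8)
\emph{Proof strategy.} The plan is to reduce the maximization over all $v\in\Lq$ with $\Lqnorm{v}{\Omega}=c$ to a one‑parameter family of ``level‑set'' candidates by a rearrangement, and then to pick out the optimal candidate by a bathtub‑principle argument. Throughout I use that $|D(v)|$ is monotone in $v$: if $v_1\le v_2$ pointwise then $D(v_1)\subseteq D(v_2)$.

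First I would show that it suffices to maximize $|E|$ over measurable $E\subseteq\{\hat u>0\}$ subject to $\int_E\hat u^{q}\,dx\le c^{q}$. Given any admissible $v$, set $E:=D(v)\cap\{\hat u>0\}$ and let $w$ be the function equal to $\hat u_+(x)$ for $x\in E$ and $0$ elsewhere. On $E$ one has $v\ge\hat u>0$, hence $\Lqnorm{w}{\Omega}^{q}=\int_E\hat u^{q}\,dx\le\int_E v^{q}\,dx\le c^{q}$. Moreover $D(v)\subseteq D(w)$: any $x\in D(v)$ either lies in $E$, where $w(x)=\hat u(x)$, or satisfies $\hat u(x)\le 0=w(x)$; in both cases $x\in D(w)$. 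In fact $D(w)=E\cup\{\hat u\le 0\}$, a disjoint union, so $|D(v)|\le|D(w)|=|E|+|\{\hat u\le 0\}|$ with $\int_E\hat u^{q}\,dx\le c^{q}$, as claimed.

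Next I would solve the reduced problem: the cheapest Lebesgue mass to add to $E$ per unit of the cost $\int_E\hat u^{q}$ lies where $\hat u$ is smallest, so the optimum fills the sublevel sets of $\hat u$ from below. Concretely, put $\phi(l):=\int_{D(l)}\hat u_+^{q}\,dx$ for $l\ge 0$; since $\hat u_+\le l$ on $D(l)$ this is finite ($\le l^{q}|\Omega|$), nondecreasing, right‑continuous, with $\phi(0)=0$ and $\phi(l)\uparrow\Lqnorm{\hat u_+}{\Omega}^{q}>c^{q}$. Hence $l^{*}:=\inf\{l>0:\phi(l)\ge c^{q}\}$ is finite and positive, and $\phi(l^{*-})=\int_{\mathring{D}(l^{*})}\hat u_+^{q}\,dx\le c^{q}\le\phi(l^{*})$. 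Since $\hat u_+\equiv l^{*}$ on the plateau $\{\hat u=l^{*}\}=D(l^{*})\setminus\mathring{D}(l^{*})$, we get $0\le c^{q}-\phi(l^{*-})\le(l^{*})^{q}|\{\hat u=l^{*}\}|$, so by non‑atomicity of Lebesgue measure there is a measurable $T\subseteq\{\hat u=l^{*}\}$ with $(l^{*})^{q}|T|=c^{q}-\phi(l^{*-})$. Setting $D:=\mathring{D}(l^{*})\cup T$ gives $\mathring{D}(l^{*})\subseteq D\subseteq D(l^{*})$ and $\int_D\hat u_+^{q}\,dx=c^{q}$, i.e.\ $\Lqnorm{\hat u_+}{D}=c$. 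I would then verify by a standard exchange argument that $E_{*}:=D\cap\{\hat u>0\}=\{0<\hat u<l^{*}\}\cup T$ maximizes $|E|$: a competitor's mass inside $\{\hat u\ge l^{*}\}$ consumes at least $(l^{*})^{q}$ of budget per unit measure, whereas every portion of $\{0<\hat u<l^{*}\}$ it omits carries at most that much, so reallocating the leftover budget cannot push the measure past $|E_{*}|$.

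Finally I would close the loop. Let $v^{*}$ be the function equal to $\hat u_+(x)$ for $x\in D$ and $0$ elsewhere; then $\Lqnorm{v^{*}}{\Omega}=c$ and $D(v^{*})=D$ (outside $D$ one has $\hat u\ge l^{*}>0=v^{*}$, while on $D$ one has $\hat u\le\hat u_+=v^{*}$), so $|D(v^{*})|=|D|=|E_{*}|+|\{\hat u\le 0\}|$. Combining the two preceding steps, $|D(v)|\le|E_{*}|+|\{\hat u\le 0\}|=|D(v^{*})|$ for every admissible $v$, hence $v^{*}$ is a maximizer and the maximal value equals $|D|$. I expect the main obstacle to be the middle step: pinning down the critical level $l^{*}$ through the one‑sided limits of $\phi$ and correctly treating the possibly non‑null plateau $\{\hat u=l^{*}\}$ — this is precisely what forces the two‑sided inclusion $\mathring{D}(l^{*})\subseteq D\subseteq D(l^{*})$ rather than an exact level set — together with making the bathtub‑type exchange inequality fully rigorous.
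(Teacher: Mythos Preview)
Your argument is correct and rests on the same rearrangement idea as the paper's proof, but it is packaged more cleanly. The paper proceeds in three successive improvement steps --- first replacing $v$ by $|v|$, then showing that any strict overshoot $v>\hat u$ on $\supp v$ can be reallocated to enlarge $D(v)$ (hence $v=\hat u_+$ on $\supp v$ at the optimum), and finally arguing by a direct comparison that among sets $D$ with $\Lqnorm{\hat u_+}{D}=c$ the one of largest measure satisfies $\opD{l}\subseteq D\subseteq D(l)$ --- whereas you collapse the first two steps into the single reduction ``maximize $|E|$ over $E\subseteq\{\hat u>0\}$ with $\int_E\hat u^{\,q}\le c^q$'' and then run a bathtub-principle exchange argument. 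Your explicit construction of the critical level $l^*$ via the one-sided limits of $\phi(l)=\int_{D(l)}\hat u_+^{\,q}$ together with non-atomicity of Lebesgue measure is also slightly more general than the paper's, which invokes the standing piecewise-continuity assumption on $\hat u$ to produce $l$; your route would go through for any measurable $\hat u$.
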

	
	\begin{proof}
			Let us denote by $v_c$ an arbitrary function in $\{v \in \Lq : \Lqnorm{v}{\Omega}= c\}$.			
			Because $ \vDomain \subseteq D(|v_c|)$ and the equality holds when $ v_c \geq 0$ in $ \Omega $,
			the volume of $ \vDomain $ is maximized for a nonnegative $ v_c $.
			
			Let $v_c$ be nonnegative.
			If $ \hat{u}-v_c $ is strictly negative in some part $\Omega' \subset \supp v_c$ satisfying $|\Omega'| \neq 0$ and vanishes in $  (\supp v_c) \backslash \Omega'$,
			another $ v'_c \in \{v \in \Lq : \Lqnorm{v}{\Omega}= c\}$ with the same $ c>0 $ can be constructed to obtain larger $ D(v'_c) $ as follows.
			Since $ \Lqnorm{\hat{u}_{+}}{\Omega} > c = \left\|v_{c}\right\|_{L^{q}(\supp v_c)}$, we have
			\begin{align*}
				\left\|\hat{u}_{+}\right\|_{L^{q}(\Omega\backslash \supp v_c)}^{q}
				&> \left\|v_{c}\right\|_{L^{q}(\supp v_c)}^{q}-\left\|\hat{u}_{+}\right\|_{L^{q}(\supp v_c)}^{q}\\
				&= \displaystyle \int_{\supp v_c}\left(v_{c}(x)^{q}-\hat{u}_{+}(x)^{q}\right)dx\\
				&=\left\|v_c\right\|^q_{L^{q}(\Omega')} - \left\|\hat{u}_{+}\right\|^q_{L^{q}(\Omega')}.
			\end{align*}
			Therefore, there exists $ \Omega'' \subset \Omega\backslash (\supp v_c) $ that satisfies 
			$\left\|\hat{u}_{+}\right\|^q_{L^{q}(\Omega'')}=\left\|v_c\right\|^q_{L^{q}(\Omega')} - \left\|\hat{u}_{+}\right\|^q_{L^{q}(\Omega')}$.
			Defining $ v'_c $ as
			\begin{align*}
			v'_c(x)=
			\left\{
			\begin{array}{ll}
			\hat{u}_+(x),&x\in \Omega' \cup \Omega'',\\
			v_c(x),&\text{otherwise},
			\end{array}
			\right.		
			\end{align*}
			we have 
			\begin{align*}
				\Lqnorm{v'_c}{\Omega}^q
				&= \Lqnorm{\hat{u}_{+}}{\Omega'}^q + \Lqnorm{\hat{u}_{+}}{\Omega''}^q + \Lqnorm{v_c}{\Omega \backslash (\Omega' \cup \Omega'')}^q\\
				&= \Lqnorm{\hat{u}_{+}}{\Omega'}^q -\left\|\hat{u}_{+}\right\|^q_{L^{q}(\Omega')} + \left\|v_c\right\|^q_{L^{q}(\Omega')} + \Lqnorm{v_c}{\Omega \backslash (\Omega' \cup \Omega'')}^q\\
				&= \Lqnorm{v_c}{\Omega \backslash \Omega''}^q = \Lqnorm{v_c}{\Omega}^q
			\end{align*}
			and $ \vDomain \subset D(v'_c)$ in the strict sense because $|\Omega''| \neq 0$.
			Therefore, when $ |\vDomain| $ is maximized, $ \hat{u}-v_c $ vanishes in $ \supp v_c $; that is, 
			\begin{align}
			\label{cond:v}
			v_c(x)=\hat{u}_{+}(x),~~x \in \supp v_c.
			\end{align}
			
			Finally, we consider a subset $D\,(=\supp v_c) \subset \Omega$ with the largest volume satisfying $\Lqnorm{\hat{u}_{+}}{D}= c$.
			In the following, we prove that the volume of such $ D $ is maximized when \eqref{eq:Dandm} holds for some $ l \in \mb{R}$.
			Since $\hat{u}$ is continuous or piecewise continuous on $\Omega$, there exist $D$ and $ l \in \mb{R}$ that satisfy $\Lqnorm{\hat{u}_{+}}{D}= c$ and \eqref{eq:Dandm}.
			Suppose that there exists a different set $D' \subset \Omega$ that satisfies $\Lqnorm{\hat{u}_{+}}{D'}= c$ 
			so that
			\begin{align}
				\label{eq:DDD}
				\Lqnorm{\hat{u}_{+}}{D'\backslash(D\cap D')} = \Lqnorm{\hat{u}_{+}}{D\backslash(D\cap D')}.
			\end{align}
			Then, we have that $\hat{u}_{+}(x)\leq l$ for all $x \in D\backslash(D\cap D') \subset D$ and $\hat{u}_{+}(x)\geq l$ for all $x \in D'\backslash(D\cap D') \subset \Omega \backslash D$ because \eqref{eq:Dandm}.
			It follows from \eqref{eq:DDD} that $|D\backslash(D\cap D')|\geq |D'\backslash(D\cap D')|$, and therefore, $|D|\geq |D'|$.
			Thus, the assertion of this lemma is proved.




	\end{proof}
	
	The following lemma provides the desired upper bound for $ |\usupp| $ on the basis of Lemma \ref{lem:volumeofv}.
	\begin{lem} \label{lem:dm}
		Given $ q \in [2,p^*+1) $ and $m>0$, if we have
		\begin{align}
			\label{ineq:uplus}
			\Lqnorm{\hat{u}_{+}}{\Omegam} > C_q \rho, 
		\end{align}
		then
		\begin{align}
			\label{ineq:volumem}
			|\usupp|\leq  |\Omegam|.			
		\end{align}
	\end{lem}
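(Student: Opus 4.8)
The plan is to realise $\usupp$ as being contained in a sublevel set of $\hat{u}$ of exactly the type whose volume is optimised in Lemma \ref{lem:volumeofv}, to invoke that lemma, and then to show that the resulting optimal volume cannot exceed $|\Omegam|$ once \eqref{ineq:uplus} holds.

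First I would set $v := u - \hat{u}$; then \eqref{embedding} together with \eqref{eq:h10error} gives $\Lqnorm{v}{\Omega} \le C_q \|v\|_{\Honezero} \le C_q\rho$ (here $q$ lies in the admissible range $[2,p^*+1)$, so $C_q$ exists). The pointwise fact driving the argument is that wherever $u_-(x)\neq 0$, i.e. $u(x)<0$, one has $\hat{u}(x) = u(x)-v(x) < -v(x) \le |v(x)|$. Hence $\{x\in\Omega : u_-(x)\neq 0\} \subseteq D(|v|)$ in the notation preceding Lemma \ref{lem:volumeofv}, so $|\usupp| \le |D(|v|)|$.

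Next I would apply Lemma \ref{lem:volumeofv} with $c := \Lqnorm{v}{\Omega}$ (assuming $c>0$; the degenerate case $v\equiv 0$ is immediate since $m>0$ forces $\{\hat{u}<0\}\subseteq \Omegam$). Its hypothesis $\Lqnorm{\hat{u}_+}{\Omega} > c$ is satisfied because $\Lqnorm{\hat{u}_+}{\Omega} \ge \Lqnorm{\hat{u}_+}{\Omegam} > C_q\rho \ge c$ by \eqref{ineq:uplus} and the bound just obtained. The lemma then yields $|D(|v|)| \le |D|$, where $D$ is the optimal set it describes, satisfying $\opD{l} \subseteq D \subseteq D(l)$ for some $l\in\mb{R}$ and $\Lqnorm{\hat{u}_+}{D} = c$.

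Finally I would compare $|D|$ with $|\Omegam|=|D(m)|$. Suppose $|D|>|D(m)|$; then $D\setminus D(m)$ has positive measure, and any point $x$ in it satisfies $\hat{u}(x)>m$ (since $x\notin D(m)$) and $\hat{u}(x)\le l$ (since $x\in D\subseteq D(l)$), forcing $m<l$. But then $D(m)\subseteq \opD{l}\subseteq D$, so $\Lqnorm{\hat{u}_+}{D} \ge \Lqnorm{\hat{u}_+}{D(m)} > C_q\rho \ge c$, contradicting $\Lqnorm{\hat{u}_+}{D}=c$. Hence $|D|\le|\Omegam|$, and chaining $|\usupp|\le|D(|v|)|\le|D|\le|\Omegam|$ gives \eqref{ineq:volumem}. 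There is no deep obstacle: the real work has been packaged into Lemma \ref{lem:volumeofv}, and the pointwise inclusion, the choice $c=\Lqnorm{v}{\Omega}$, and the $|D|$-versus-$|\Omegam|$ comparison are routine; the one point that needs care is making the first step rigorous at the level of Lebesgue measure, i.e. that it suffices to control $|\{x\in\Omega:u_-(x)\neq 0\}|$, which under the standing regularity assumptions agrees with $|\usupp|$.
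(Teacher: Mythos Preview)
Your argument is correct and follows essentially the same route as the paper's proof: bound $\|u-\hat u\|_{L^q}$ via the embedding, invoke Lemma~\ref{lem:volumeofv} with $c=\|u-\hat u\|_{L^q}$, and then compare the optimal set $D$ with $D(m)$ by a contradiction showing $m<l$ would force $D(m)\subseteq\opD{l}\subseteq D$ against $\Lqnorm{\hat u_+}{D}=c<\Lqnorm{\hat u_+}{D(m)}$. The only cosmetic differences are your sign convention (you take $v=u-\hat u$ and pass to $|v|$, whereas the paper writes $u=\hat u-\omega$ and works with $\omega$ directly) and your phrasing of the final contradiction in terms of $|D|>|D(m)|$ rather than $D(m)\subsetneq D(l)$; the logic is identical.
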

	
	\begin{proof}
		The enclosed solution $ u $ can be expressed by $ u=\hat{u}-\omega $, $\|\omega\|_{\Honezero}\leq \rho$.
		The embedding $ \Honezero \hookrightarrow \Lq$ confirms that $\|\omega\|_{\Lq}\leq C_q \rho$.
		We denote $c:=\|\omega\|_{\Lq}$,
		and then, have
		\begin{align*}
			|\usupp| \leq \max_{\Lqnorm{v}{\Omega}= c} |D(v)|.
		\end{align*}
		Lemma \ref{lem:volumeofv} ensures that this maximal value is realized when
		\begin{align*}
			v(x)
			=
			\left\{
			\begin{array}{ll}
				\hat{u}_+(x),&x\in D,\\
				0,&\text{\rm otherwise},
			\end{array}
			\right.		
		\end{align*}
		where $ D $ is a set that satisfies $\Lqnorm{\hat{u}_{+}}{D}= c$ and \eqref{eq:Dandm} for some $l \in \mathbb{R}$.
		The maximal value is $|D|$,  which is not greater than $|D(l)|$ due to \eqref{eq:Dandm}.
		Inequality \eqref{ineq:uplus} ensures that
		\begin{align}
			\label{eq:DandDm}
			\Lqnorm{\hat{u}_{+}}{D} = c \leq C_q \rho < \Lqnorm{\hat{u}_{+}}{D(m)}.
		\end{align}
		Suppose that $D(m) \subset D(l)$ in the strict sense.
		Then, we have $m<l$, and thus, $D(m) \subseteq \opD{l} \subseteq D$ due to \eqref{eq:Dandm}.
		This contradicts \eqref{eq:DandDm}.
		Therefore, we have $D(l) \subseteq D(m)$ and conclude \eqref{ineq:volumem}.

%
%

		
		
	\end{proof}
	
		The choice of $q$ does not greatly affect realizing \eqref{ineq:uplus}, and we can usually set $q=2$.
		Meanwhile, appropriately setting $m$ is important for confirming \eqref{ineq:uplus} as discussed below:
		Let $q$, $\rho$, and $\hat{u}$ be fixed.
		Then, $\Lqnorm{\hat{u}_{+}}{\Omegam}$ monotonically decreases as $m$ decreases, and $\Lqnorm{\hat{u}_{+}}{\Omegam} \downarrow 0$ as $m \downarrow 0$.
		Therefore, although smaller $m$ gives a good upper bound of $|\usupp|$ as in \eqref{ineq:volumem},
		too small $m>0$ leads to failure in ensuring \eqref{ineq:uplus}.
		Some concrete choices of $m$ can be found in our numerical experiments in Section \ref{sec:ex}.
	\section{Lower bound for the minimal eigenvalue}
	\label{sec:lowerbound}
	The purpose of this section is to estimate a lower bound of the minimal eigenvalue $ \lambda_1(\usupp) $
	while we assume an $ H^1_0 $-estimation \eqref{eq:h10error} only; therefore, $ \usupp $ cannot be identified explicitly.
	To this end, we use the following Rayleigh–Faber–Krahn constant.
	
	\begin{theo}[\cite{faber1923bweis,krahn1925uber,krahn1926uber}]
	\label{theo:RFK}
		Inequality \eqref{ineq:Akn} with $k=1$ holds for
		\begin{align}
		\label{eq:A1n}
			A_{1,N} = B_N^{\frac{2}{N}} j_{\frac{N}{2}-1,1}^2
		\end{align}
		where $ B_N = \pi^{N/2}/\Gamma(N/2+1)$ denotes the volume of the unit $ N $-ball with the usual gamma function $\Gamma$, and $j_{\frac{N}{2}-1,1}$ is the first positive zero of the Bessel function of order $\frac{N}{2}-1$.
		The equality in \eqref{ineq:Akn} is attained if and only if $\Omega$ is a ball in $\mathbb{R}^{N}$.
	\end{theo}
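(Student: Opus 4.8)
The plan is the classical one: prove the inequality by Schwarz symmetrization of the first Dirichlet eigenfunction, and then evaluate $\lambda_1$ explicitly on a ball. Since $\Omega$ is bounded, the embedding $\Honezero \hookrightarrow \Ltwo$ is compact, so the infimum in \eqref{eq:defi-diri-eigenvalue} is attained; let $u \in \Honezero \setminus \{0\}$ realize it. Replacing $u$ by $|u|$ alters neither $\|u\|_{\Honezero}$, $\|u\|_{L^2(\Omega)}$, nor membership in $\Honezero$, so I may assume $u \geq 0$. Extending $u$ by $0$ to $\mathbb{R}^N$ produces a function in $H^1(\mathbb{R}^N)$ with compact support; let $u^{*} \geq 0$ denote its symmetric decreasing rearrangement, the radially symmetric nonincreasing function on $\mathbb{R}^N$ equimeasurable with $u$. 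Then $u^{*}$ is supported in the closed ball $\Omega^{*}$ centered at the origin with $|\Omega^{*}| = |\Omega|$, and $u^{*} \in H^1_0(\Omega^{*})$.

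Two standard rearrangement facts do the work. Equimeasurability gives $\|u^{*}\|_{L^2(\Omega^{*})} = \|u\|_{L^2(\Omega)}$, while the Pólya–Szegő inequality gives $\|\nabla u^{*}\|_{L^2(\Omega^{*})} \leq \|\nabla u\|_{L^2(\Omega)}$. Using $u^{*}$ as a test function in the variational characterization \eqref{eq:defi-diri-eigenvalue} on $\Omega^{*}$ then yields
\begin{align*}
\lambda_1(\Omega^{*}) \;\leq\; \frac{\|\nabla u^{*}\|_{L^2(\Omega^{*})}^2}{\|u^{*}\|_{L^2(\Omega^{*})}^2} \;\leq\; \frac{\|\nabla u\|_{L^2(\Omega)}^2}{\|u\|_{L^2(\Omega)}^2} \;=\; \lambda_1(\Omega).
\end{align*}

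It remains to compute $\lambda_1(\Omega^{*})$. Writing $\Omega^{*} = B_R$ with $B_N R^N = |\Omega|$, i.e. $R = (|\Omega|/B_N)^{1/N}$, the first Dirichlet eigenfunction of $B_R$ is radial, and the substitution $u(r) = r^{1-N/2} w(\sqrt{\lambda}\,r)$ turns $u'' + \frac{N-1}{r}u' + \lambda u = 0$ into Bessel's equation of order $\frac{N}{2}-1$; the solution regular at the origin is $w = J_{\frac{N}{2}-1}$, and the Dirichlet condition at $r=R$ forces $J_{\frac{N}{2}-1}(\sqrt{\lambda}\,R) = 0$, so the smallest admissible eigenvalue is $\lambda_1(B_R) = (j_{\frac{N}{2}-1,1}/R)^2$ (the eigenfunction being positive since $J_{\frac{N}{2}-1} > 0$ on $(0,j_{\frac{N}{2}-1,1})$). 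Substituting the value of $R$,
\begin{align*}
\lambda_1(\Omega^{*}) \;=\; \frac{j_{\frac{N}{2}-1,1}^2}{R^2} \;=\; j_{\frac{N}{2}-1,1}^2 \left(\frac{B_N}{|\Omega|}\right)^{\frac{2}{N}} \;=\; B_N^{\frac{2}{N}} j_{\frac{N}{2}-1,1}^2 \left(\frac{1}{|\Omega|}\right)^{\frac{2}{N}} \;=\; A_{1,N}\left(\frac{1}{|\Omega|}\right)^{\frac{2}{N}},
\end{align*}
with $A_{1,N}$ as in \eqref{eq:A1n}; combined with the previous display, this is \eqref{ineq:Akn} for $k=1$. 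For the equality claim, $\lambda_1(\Omega) = A_{1,N}|\Omega|^{-2/N}$ forces equality in Pólya–Szegő; invoking the characterization of that equality case (Brothers–Ziemer) together with the fact that a first eigenfunction is strictly positive on the interior of each component — so its distribution function is strictly decreasing and the degenerate "flat" cases are ruled out — forces $u$ to coincide, up to a translation, with a radial strictly decreasing profile on a ball, hence $\Omega$ to be a ball up to a null set; the converse is immediate from the computation above.

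The main obstacle is not in this skeleton but in the two imported analytic facts about rearrangement: the Pólya–Szegő inequality itself (via the co-area formula and the isoperimetric inequality applied to the level sets of $u$) and, more delicately, the sharp description of its equality case. I would cite these rather than reprove them; the eigenvalue computation on $B_R$ is routine ODE theory.
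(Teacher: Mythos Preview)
Your argument is the classical Faber--Krahn proof via Schwarz symmetrization and P\'olya--Szeg\H{o}, and it is correct as sketched. Note, however, that the paper does not actually prove this theorem: it is stated with a citation to the original works of Faber and Krahn and used as a black box, so there is no in-paper proof to compare against. Your sketch is precisely the standard argument one would find behind those references (with the Brothers--Ziemer result supplying the modern treatment of the equality case), so nothing is missing or in tension with the paper.
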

	
	In general, evaluating $A_{1,N}$ in explicit decimal form using \eqref{eq:A1n} is not trivial.
	However, one can find in Table \ref{table:A1n} rigorous enclosures of $B_N$, $j_{\frac{N}{2}-1,1}$, and $A_{1,N}$ for several dimensions $N$.
	These were derived by strictly estimating all numerical errors; therefore, the correctness is mathematically guaranteed in the sense that correct values are included in the corresponding closed intervals.
	The enclosures were obtained using the kv library \cite{kashiwagikv}, a C++ based package for rigorous computations.
	The kv library includes four interval arithmetic operations and a function for rigorously calculating the gamma functions needed to derive $B_N$.
	However, no function for enclosing the Bessel function $j_{\frac{N}{2}-1,1}$ is built therein.
	Accordingly, we present a rigorous algorithm for calculating $j_{\frac{N}{2}-1,1}$ in
	Appendix \ref{appendix:rfkineq} based on the bisection method.
	\begin{table}[h]
	\caption{Strict enclosures of $B_N$, $j_{\frac{N}{2}-1,1}$, and $A_{1,N}$ for $ N=2,3,4,5 $.}
	\label{table:A1n}
	\begin{center}
		\renewcommand\arraystretch{1.3}
		\begin{tabular}{llll}
			\hline
			$N$&$B_N$&$j_{\frac{N}{2}-1,1}$&$A_{1,N}$\\
			\hline
			\hline
			$2$&
			[3.1415926535, 3.1415926536]&
			[2.4048255576, 2.4048255577]&
			[18.1684145355, 18.1684145356] \\ 
			$3$&
			[4.1887902047, 4.1887902048]&
			[3.1415926535, 3.1415926536]&
			[25.6463452794, 25.6463452795] \\ 
			$4$&
			[4.9348022005, 4.9348022006]&
			[3.8317059702, 3.8317059703]&
			[32.6151384322, 32.6151384323] \\ 
			$5$&
			[5.2637890139, 5.2637890140]&
			[4.4934094579, 4.4934094580]&
			[39.2347942529, 39.2347942530] \\ 
			\hline
		\end{tabular}
	\end{center}
	\end{table}
	
	Combining Lemma \ref{lem:dm} and Theorem \ref{theo:RFK} for $ N=2,3 $ and using the lower bound in Table \ref{table:A1n}, we immediately have the following lower bounds for the minimal eigenvalue on $\usupp$.
	\begin{cor}
		\label{coro:lower_eigenm_n2n3}
		If \eqref{ineq:uplus} holds given $ q \in [2,p^*) $ and $m>0$, then we have
		\begin{align*}
		&\lambda_{1}(\usupp) \geq 18.1684145355 |\Omegam|^{-1},&N=2,\\
		&\lambda_{1}(\usupp) \geq 25.6463452794 |\Omegam|^{-\frac{2}{3}},&N=3.
		\end{align*}	
	\end{cor}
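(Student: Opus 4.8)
The plan is to chain together three ingredients that are already in place: the volume estimate of Lemma~\ref{lem:dm}, the Rayleigh--Faber--Krahn inequality of Theorem~\ref{theo:RFK}, and the verified numerical enclosures of $A_{1,N}$ collected in Table~\ref{table:A1n}. First I would apply Lemma~\ref{lem:dm}: since $[2,p^*) \subset [2,p^*+1)$, the hypothesis \eqref{ineq:uplus} with $q \in [2,p^*)$ and $m>0$ lies within the scope of that lemma, which therefore yields $|\usupp| \leq |\Omegam|$.

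Next I would pass to the eigenvalue. If the interior of $\usupp$ is empty, then by the convention fixed in the introduction $\lambda_1(\usupp) = \infty$ and both asserted inequalities hold trivially; so assume the interior is nonempty and call it $S$. Applying Theorem~\ref{theo:RFK} (equivalently, Fact~\ref{fact:lowerbound} with $k=1$) to the bounded open set $S$, on which $\lambda_1(\usupp)$ is understood in the sense of \eqref{eq:defi-diri-eigenvalue}, gives $\lambda_1(\usupp) \geq A_{1,N}\,|S|^{-2/N}$. Since $S \subseteq \usupp$ we have $|S| \leq |\usupp| \leq |\Omegam|$, and because $t \mapsto t^{-2/N}$ is decreasing on $(0,\infty)$ this improves to $\lambda_1(\usupp) \geq A_{1,N}\,|\Omegam|^{-2/N}$.

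Finally I would substitute the dimension-specific data. For $N=2$ one has $2/N = 1$ and, from the lower endpoint of the enclosure in Table~\ref{table:A1n}, $A_{1,2} \geq 18.1684145355$; for $N=3$ one has $2/N = 2/3$ and $A_{1,3} \geq 25.6463452794$. Inserting these produces the two claimed bounds.

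I expect no genuine analytical obstacle here: the statement is an immediate corollary of Lemma~\ref{lem:dm} and Theorem~\ref{theo:RFK}, and the only points that need a word of care are the degenerate case of empty interior (covered by the $\infty$ convention) and the fact that the Rayleigh--Faber--Krahn inequality is being applied to the possibly disconnected and irregular set $\usupp$, which is legitimate because Theorem~\ref{theo:RFK} is stated for arbitrary bounded domains. The real difficulty was already resolved upstream, namely bounding $|\usupp|$ with no knowledge of the shape or location of $\usupp$ (Lemmas~\ref{lem:volumeofv} and~\ref{lem:dm}) and obtaining the guaranteed decimal enclosure of $A_{1,N}$ (Theorem~\ref{theo:RFK} together with Appendix~\ref{appendix:rfkineq}).
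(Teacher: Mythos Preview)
Your proposal is correct and follows exactly the approach indicated in the paper, which simply states that the corollary is obtained ``combining Lemma~\ref{lem:dm} and Theorem~\ref{theo:RFK} for $N=2,3$ and using the lower bound in Table~\ref{table:A1n}.'' Your added remarks on the empty-interior convention and the monotonicity of $t\mapsto t^{-2/N}$ make the immediate deduction fully explicit.
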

	
	\begin{rem}
	\label{rem:li}
	Instead of Theorem $\ref{theo:RFK}$, one can use the evaluation provided in {\rm \cite{li1983schrodinger}}:
	For a bounded domain $\Omega$, we have
	\begin{align}
		\lambda_{k}(\Omega) \geq \frac{4\pi^2 N}{N+2} \left(\frac{k}{B_N|\Omega|}\right)^{\frac{2}{N}}.
	\end{align}
	This estimation is somewhat rough compared with that in Corollary \ref{coro:lower_eigenm_n2n3} but stands alone in the sense that the lower bound can be calculated by hand as long as we know $B_N$.
	\end{rem}
	\section{Embedding constant}\label{sec:embedding}
	
	Explicitly estimating the embedding constant $C_p$ is important for our method.
	We use \cite[Corollary A.2]{tanaka2017sharp} to obtain an explicit value of $C_p$ for bounded domains based on the best constant in the classical Sobolev inequality provided in \cite{aubin1976,talenti1976}.\\
	\begin{theo}[{\cite[Corollary A.2]{tanaka2017sharp}}]
		\label{sitate}
		Let $\Omega \subset \mathbb{R}^{N}(N \geq 2)$ be a bounded domain, the measure of which is denoted by $| \Omega |$.
		Let $p \in (N/(N-1),2N/(N-2) ]$ if $N \geq 3$, $p \in (2, \infty)$ if $N=2$.
		Then, \eqref{embedding} holds for
		\begin{align*}
			C_p( \Omega ) = | \Omega |^{\frac{1}{N}+\frac{1}{p}-\frac{1}{2}}T_{p,N}.
		\end{align*}
		Here, $T_{p,N}$ is defined by
		\begin{align}
			\label{eq:tp}
			T_{p,N}=\pi^{-\frac{1}{2}} N^{-\frac{1}{q}}\left(\frac{q-1}{N-q}\right)^{1-\frac{1}{q}}\left\{\frac{\Gamma\left(1+\frac{N}{2}\right) \Gamma(N)}{\Gamma\left(\frac{N}{q}\right) \Gamma\left(1+N-\frac{N}{q}\right)}\right\}^{\frac{1}{N}},
		\end{align}
		where $\Gamma$ is the gamma function and $q=Np/(N+p)$.
	\end{theo}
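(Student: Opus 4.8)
The plan is to derive Theorem~\ref{sitate} directly from the sharp Sobolev inequality on $\mathbb{R}^{N}$ due to Aubin and Talenti \cite{aubin1976,talenti1976}, together with a single application of H\"older's inequality that manufactures the volume factor. Recall that the sharp inequality asserts that for $1<s<N$ and every $w$ in the homogeneous Sobolev space $\mathcal{D}^{1,s}(\mathbb{R}^{N})$ (equivalently, for all $w \in C_c^{\infty}(\mathbb{R}^{N})$ by density),
\begin{align*}
\|w\|_{L^{s^{*}}(\mathbb{R}^{N})} \leq S_{s,N}\,\|\nabla w\|_{L^{s}(\mathbb{R}^{N})}, \qquad s^{*}:=\frac{Ns}{N-s},
\end{align*}
where $S_{s,N}$ is precisely the constant appearing on the right-hand side of \eqref{eq:tp} with the outer index replaced by $s^{*}$ and the inner parameter equal to $s$; in particular $S_{s,N}=T_{s^{*},N}$, a term-by-term comparison I would verify by substituting $s=N s^{*}/(N+s^{*})$ into \eqref{eq:tp}.

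First I would fix the inner index. Given $p$ in the stated range, set $s:=Np/(N+p)$, exactly as in the statement ($s$ is the quantity called $q$ there). A short computation gives $1/s = 1/N + 1/p$ and $s^{*}=p$. The lower restriction $p>N/(N-1)$ (which reads $p>2$ when $N=2$) is equivalent to $s>1$, while $s<N$ holds automatically because $p<N+p$; moreover the upper restriction $p\leq 2N/(N-2)$ when $N\geq3$ (and no upper restriction when $N=2$) is equivalent to $s\leq 2$. Hence $s\in(1,N]$ with $s\leq 2$, so Talenti's inequality applies.

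Next I would transfer the estimate to $\Omega$. Let $u\in\Honezero$ and let $\tilde u$ be its extension by zero to $\mathbb{R}^{N}$; then $\tilde u\in W^{1,2}(\mathbb{R}^{N})$ has compact support, so $\tilde u\in\mathcal{D}^{1,s}(\mathbb{R}^{N})$ because $s\leq 2$ and $|\Omega|<\infty$ (no regularity of $\partial\Omega$ is needed here). Applying the sharp inequality to $\tilde u$ and using $s^{*}=p$,
\begin{align*}
\|u\|_{L^{p}(\Omega)}=\|\tilde u\|_{L^{p}(\mathbb{R}^{N})}\leq S_{s,N}\,\|\nabla\tilde u\|_{L^{s}(\mathbb{R}^{N})}=S_{s,N}\,\|\nabla u\|_{L^{s}(\Omega)}.
\end{align*}
Since $s\leq 2$, H\"older's inequality with exponents $2/s$ and $2/(2-s)$ yields
\begin{align*}
\|\nabla u\|_{L^{s}(\Omega)}\leq |\Omega|^{\frac{1}{s}-\frac{1}{2}}\,\|\nabla u\|_{L^{2}(\Omega)}=|\Omega|^{\frac{1}{s}-\frac{1}{2}}\,\|u\|_{\Honezero},
\end{align*}
the case $s=2$ being trivial. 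Combining the two displays and substituting $1/s-1/2=1/N+1/p-1/2$ together with $S_{s,N}=T_{s^{*},N}=T_{p,N}$ gives \eqref{embedding} with $C_{p}(\Omega)=|\Omega|^{1/N+1/p-1/2}T_{p,N}$, as claimed.

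I expect the only genuinely delicate points to be bookkeeping: checking that the stated interval for $p$ translates exactly into $s\in(1,N]$ with $s\leq 2$ (so that both Talenti's inequality and the H\"older step are legitimate, including the borderline case $p=2N/(N-2)$, where $s=2$ and the volume exponent $1/N+1/p-1/2$ vanishes), and confirming that the Talenti best constant, written in terms of $s$, coincides factor-by-factor with $T_{p,N}$ after the substitution $s=Np/(N+p)$. Both are routine but must be carried out carefully.
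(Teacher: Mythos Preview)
Your argument is correct, and it is precisely the route indicated by the paper: the paper does not supply its own proof of Theorem~\ref{sitate} but quotes it from \cite[Corollary A.2]{tanaka2017sharp}, remarking that the result is ``based on the best constant in the classical Sobolev inequality provided in \cite{aubin1976,talenti1976}.'' Your derivation---zero extension of $u\in H^1_0(\Omega)$, Aubin--Talenti on $\mathbb{R}^N$ with exponent $s=q=Np/(N+p)$ so that $s^*=p$, followed by H\"older on $\nabla u$ to pass from $L^s$ to $L^2$ and produce the factor $|\Omega|^{1/s-1/2}=|\Omega|^{1/N+1/p-1/2}$---is exactly that strategy, and your bookkeeping on the admissible range of $p$ (equivalently $1<s\le 2<N$ for $N\ge 3$, and $1<s<2=N$ for $N=2$) is accurate.
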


	\begin{rem}
		Another formula to estimate the embedding constant $C_p$ can be found in {\rm \cite[Lemma 7.10]{nakaoplumwatanabe2019numerical}}, which is applicable not only to bounded domains but also to unbounded domains with a more generalized norm in $\Honezero$.
		Moreover, in {\rm\cite{tanaka2017sharp}}, one can find very sharp estimations of the best values of $C_p$ for $p=3,4,5,6,7$ on $\Omega=(0,1)^2$.
	\end{rem}

	Table \ref{table:embedding} shows the strict upper bounds of the embedding constants for several cases.
	These were evaluated using MATLAB 2019a with INTLAB version 11 \cite{rump1999book} with rounding errors strictly estimated;
	the required gamma functions were strictly computed via the function ``gamma'' packaged in INTLAB.
\begin{table}[h]
  \caption{Upper bounds for embedding constants.}
  \label{table:embedding}
    \begin{minipage}[t]{.45\textwidth}
    \begin{center}
    	\renewcommand\arraystretch{1.3}
		\begin{tabular}{lllll}
			\hline
			$N$& $|\Omega|$ &$p$& $T_{p,N}$ & $C_p(\Omega)$ \\
			\hline
			\hline
			2&1&3&$0.27991105$&$0.27991105$\\
			&&4&$0.31830989$&$0.31830989$\\
			&&5&$0.35780389$&$0.35780389$\\
			&&6&$0.39585400$&$0.39585400$\\[3pt]
			\hdashline
			&2&2&$0.28209480$&$0.56418959$\\
			&&3&$0.27991105$&$0.35266582$\\
			&&4&$0.31830989$&$0.37853639$\\
			&&5&$0.35780389$&$0.41100874$\\
			&&6&$0.39585400$&$0.44433110$\\
			\hline
		\end{tabular}
    \end{center}
  \end{minipage}
  \hfill
  \begin{minipage}[t]{.45\textwidth}
    \begin{center}
    	\renewcommand\arraystretch{1.3}
		\begin{tabular}{lllll}
			\hline
			$N$& $|\Omega|$ &$p$& $T_{p,N}$ & $C_p(\Omega)$ \\
			\hline
			\hline
			3&1&3&$0.26053089$&$0.26053089$\\
			&&4&$0.31800758$&$0.31800758$\\
			&&5&$0.37366932$&$0.37366932$\\
			&&6&$0.42726055$&$0.42726055$\\[3pt]
			\hdashline
			&2&2&$0.20540545$&$0.41730224$\\
			&&3&$0.26053089$&$0.29243603$\\
			&&4&$0.31800758$&$0.33691730$\\
			&&5&$0.37366932$&$0.38240342$\\
			&&6&$0.42726055$&$0.42726055$\\
			\hline
		\end{tabular}
    \end{center}
  \end{minipage}
\end{table}

	In the case of $p=2$, to which Theorem \ref{sitate} is inapplicable, the following best evaluation can be used instead of Theorem \ref{sitate}:
	\begin{align}
		\label{poincarebest}
		\|u\|_{L^{2}(\Omega)} \leq \frac{1}{\sqrt{\lambda_{1}(\Omega)}}\|u\|_{\Honezero}.
	\end{align}
	For example, when $\Omega=(0,1)^N$ $(N=1,2,3,\cdots)$, we have the exact eigenvalue $\lambda_{1}(\Omega)=N\pi^{2}$.                                            
	Even otherwise, lower bounds of $\lambda_{1}(\Omega)$ (and therefore upper bounds for the embedding constant) can be estimated for bounded domain $\Omega$ using the formulas in Section \ref{sec:lowerbound}.
	\section{A posteriori verification for positivity}
	\label{sec:positive}
	In this section, we design a unified method for proving positivity.
	We first clarify the assumption imposed on nonlinearity $f$ with some explicit parameters.
	Let $ f $ satisfy
	\begin{align}
	\label{f:theo1}
	-f(-t)\leq \lambda t + \displaystyle \sum_{i=1}^{n}a_{i}t^{p_{i}} \text{~~for~all~~}t\geq 0
	\end{align}
	for some $ \lambda \in \mathbb{R} $, nonnegative coefficients $ a_1,a_2,\cdots,a_n $, and subcritical exponents $ p_1,p_2,\cdots,p_n \in (1,p^*)$.
	This assumption does not break the generality of $f$ satisfying \eqref{asuume:f}.
	Our algorithm for verifying positivity is based on the following argument, a generalization of \cite[Thorem 2.1]{tanaka2020numerical}.
	In the following, $\lambda_1(\usupp)$ and $C_{p+1}(\usupp)$ are interpreted as $\lambda_1(\text{int\,}(\usupp))$ and $C_{p+1}(\text{int\,}(\usupp))$, respectively, where $\text{int\,}(\usupp)$ denotes the interior of $\usupp$.
	We denote by $\lowerboundoflambda \in \mathbb{R}$ a lower bound of $\lambda_{1}(\usupp)$.
	When the interior of $\usupp$ is empty, we can set $\lowerboundoflambda$ to an arbitrarily large value.
	
	\begin{lem}\label{theo1}
		Suppose that a solution $u \in \Honezero$ of \eqref{main:fpro} exists in $ \ol{B}(\hat{u},\rho) $, given
		some approximation $ \hat{u} \in \Honezero$ and radius $\rho>0$.
		If	
		\begin{align}
		&\lambda < \lowerboundoflambda
		\shortintertext{and}
		&\displaystyle \sum_{i=1}^{n}a_i C_{p_{i}+1}(\usupp)^{2}\left( \left\|\hat{u}_{-}\right\|_{L^{p_{i}+1}}+C_{p_{i}+1}\rho\right)^{p_{i}-1}<1 - \f{\lambda}{\lowerboundoflambda},\label{cond:theo1}
		\end{align}	
		then $u$ is nonnegative.
		Note that when $ \usupp $ is disconnected, \eqref{cond:theo1} is understood as the set of inequalities for all connected components of $ \usupp $.
	\end{lem}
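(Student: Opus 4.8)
\emph{Proof plan.} The approach is the standard negative-part test-function argument, carried out so that every function-space constant appearing is the one associated with the support $\usupp$ rather than the whole domain $\Omega$. Write $u = \hat u - \omega$ with $\|\omega\|_{\Honezero}\le\rho$, which is possible since $u\in\ball$. Because truncation maps $\Honezero$ into itself, $v := -u_-$ is an admissible test function in $\langle\mathcal F(u),v\rangle = 0$; using $\nabla u_- = -\nabla u$ a.e.\ on $\{u<0\}$ and $u = -u_-$ there, one gets
\begin{align*}
\|u_-\|_{\Honezero}^2 = \bigl(\nabla u,\nabla(-u_-)\bigr)_{L^2} = -\int_{\{u<0\}} f(u)\,u_-\,dx = \int_{\Omega} u_-\bigl(-f(-u_-)\bigr)\,dx .
\end{align*}

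Next I would apply the structural hypothesis \eqref{f:theo1} pointwise with $t = u_-(x)\ge 0$ and multiply by $u_-(x)\ge 0$; since the $a_i$ are nonnegative this yields $u_-(-f(-u_-))\le \lambda u_-^2 + \sum_{i=1}^n a_i u_-^{p_i+1}$ a.e., and hence, after integration,
\begin{align*}
\|u_-\|_{\Honezero}^2 \le \lambda\|u_-\|_{L^2}^2 + \sum_{i=1}^n a_i\|u_-\|_{L^{p_i+1}}^{p_i+1} .
\end{align*}
Every term on the right is supported in $\usupp$. The crucial step is to argue that the zero extension of $u_-$ belongs to $\Honezero(\mathrm{int}(\usupp))$ with $\|u_-\|_{\Honezero(\mathrm{int}(\usupp))} = \|u_-\|_{\Honezero(\Omega)}$; granting this, \eqref{eq:defi-diri-eigenvalue} gives $\|u_-\|_{L^2}^2 \le \lambda_1(\usupp)^{-1}\|u_-\|_{\Honezero}^2 \le \lowerboundoflambda^{-1}\|u_-\|_{\Honezero}^2$ (using $\lambda_1(\usupp)\ge\lowerboundoflambda$), and the embedding on the support gives $\|u_-\|_{L^{p_i+1}}^2 \le C_{p_i+1}(\usupp)^2\|u_-\|_{\Honezero}^2$. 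Splitting $\|u_-\|_{L^{p_i+1}}^{p_i+1} = \|u_-\|_{L^{p_i+1}}^{p_i-1}\,\|u_-\|_{L^{p_i+1}}^2$ and bounding the low-order factor via the pointwise inequality $u_-\le\hat u_-+|\omega|$ together with $\Honezero\hookrightarrow L^{p_i+1}(\Omega)$, namely $\|u_-\|_{L^{p_i+1}} \le \|\hat u_-\|_{L^{p_i+1}} + C_{p_i+1}\rho$, one obtains
\begin{align*}
\|u_-\|_{\Honezero}^2 \le \left(\frac{\lambda}{\lowerboundoflambda} + \sum_{i=1}^n a_i\,C_{p_i+1}(\usupp)^2\bigl(\|\hat u_-\|_{L^{p_i+1}} + C_{p_i+1}\rho\bigr)^{p_i-1}\right)\|u_-\|_{\Honezero}^2 .
\end{align*}
By \eqref{cond:theo1} the factor in parentheses is strictly smaller than $1$, so necessarily $\|u_-\|_{\Honezero} = 0$, i.e.\ $u_- = 0$ a.e.\ and $u\ge 0$. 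If $\usupp$ is disconnected, the same computation is performed on each connected component $\Omega_j$, using $\|u_-\|_{\Honezero(\Omega)}^2 = \sum_j\|u_-\|_{\Honezero(\Omega_j)}^2$, $\lambda_1(\Omega_j)\ge\lowerboundoflambda$, and the componentwise form of \eqref{cond:theo1}.

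I expect the principal difficulty to be the membership $u_- \in \Honezero(\mathrm{int}(\usupp))$ with unchanged norm: this is exactly what lets the favorable constants $\lambda_1(\usupp)$ and $C_{p_i+1}(\usupp)$ — large, resp.\ small, thanks to the volume bound of Lemma \ref{lem:dm} — replace $\lambda_1(\Omega)$ and $C_{p_i+1}(\Omega)$, and it is the only place where the assumed continuity or piecewise continuity of $\hat u$, hence mild regularity of $\usupp$, is genuinely used (one must in particular control the null-set issues relating $\{u_-\neq 0\}$, $\usupp$, and $\mathrm{int}(\usupp)$). A minor additional point is the sign of $\lambda$: when $\lambda\le 0$ the term $\lambda\|u_-\|_{L^2}^2$ is discarded as nonpositive and $1 - \lambda/\lowerboundoflambda\ge 1$, so the estimate still closes; when $\lambda>0$ one uses $\lowerboundoflambda>0$ to absorb the $L^2$ term as above; and the degenerate case $\mathrm{int}(\usupp)=\emptyset$ is covered by the convention $\lambda_1(\usupp)=+\infty$, where $u_-=0$ is obtained directly.
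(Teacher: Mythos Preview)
Your proof is correct and follows essentially the same route as the paper: test the weak formulation with $u_-$, apply the structural bound \eqref{f:theo1} to get $\|u_-\|_{\Honezero}^2 \le \lambda\|u_-\|_{L^2}^2 + \sum_i a_i\|u_-\|_{L^{p_i+1}}^{p_i+1}$, then replace the $L^2$- and $L^{p_i+1}$-norms by $\|u_-\|_{\Honezero}^2$ via the eigenvalue and embedding constants on $\usupp$, and control the residual factor $\|u_-\|_{L^{p_i+1}}^{p_i-1}$ by $\|\hat u_-\|_{L^{p_i+1}}+C_{p_i+1}\rho$. You are in fact more explicit than the paper about the sign of $\lambda$, the disconnected case, and the technical point $u_-\in\Honezero(\mathrm{int}(\usupp))$, which the paper simply uses without comment.
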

	\begin{proof}
		We extend the proof of \cite[Thorem 2.1]{tanaka2020numerical} to achieve a more precise evaluation.
		For $p\in (1,p^*)$, we have
		\begin{align}
		\label{uandu_}
		\left\|u_{-}\right\|_{L^{p+1}}
		\leq
		\left\|\hat{u}_{-}\right\|_{L^{p+1}}+ C_{p+1}\rho;
		\end{align}
		see the proof of \cite[Thorem 2.1]{tanaka2020numerical}.
		We then prove that the norm of $ u_{-} $ vanishes.
		Because $u$ satisfies
		\begin{align*}
		\left(\nabla u,\nabla v\right)_{L^2}=\left<F(u),v\right>{\rm~~for~all~}v\in \Honezero,
		\end{align*}
		by fixing $v=u_{-}$, we have from \eqref{f:theo1} that
		\begin{align}
		\left\|u_{-}\right\|_{\Honezero}^{2}
		\leq &\displaystyle \int_{\Omega}\left\{\lambda\left(u_{-}(x)\right)^{2}+\sum_{i=1}^{n}a_{i}\left(u_{-}(x)\right)^{p_{i}+1}\right\}dx \nonumber \\
		= &\displaystyle \lambda\left\|u_{-}\right\|_{L^{2}}^{2}+\sum_{i=1}^{n}a_{i}\left\|u_{-}\right\|_{L^{p_{i}+1}}^{p_{i}+1}\nonumber \\
		\leq &\left\{ \displaystyle \frac{\lambda}{\lowerboundoflambda} +\sum_{i=1}^{n}a_{i}C_{p_{i}+1}(\usupp)^{2} \left\|u_{-}\right\|_{L^{p_{i}+1}}^{p_{i}-1} \right\} \left\|u_{-}\right\|_{\Honezero}^{2}.
		\end{align}
		Inequalities \eqref{cond:theo1} and \eqref{uandu_} lead to
		\begin{align}
			\displaystyle \frac{\lambda}{\lowerboundoflambda} +\sum_{i=1}^{n}a_{i}C_{p_{i}+1}(\usupp)^{2} \left\|u_{-}\right\|_{L^{p_{i}+1}}^{p_{i}-1} < 1,
		\end{align}
		which ensures $ \|u_{-}\|_{\Honezero} = 0 $.
		Thus, the nonnegativity of $ u $ is proved.
		
	\end{proof}
	\begin{rem}
		The maximum principle ensures the positivity of $u$ (i.e., $u(x)>0$ inside $\Omega$) from its nonnegativity for a wide class of nonlinearities $f$.
		See, for example, {\rm\cite{drabek2009maximum}} for a generalized maximum principle applicable for weak solutions.
	\end{rem}	
	
	\begin{rem}\label{rem:ifrhosmall}
		The formula $ \left\|\hat{u}_{-}\right\|_{L^{p_{i}+1}}+C_{p_{i}+1}\rho$ in parentheses in \eqref{cond:theo1} indicates that the nonnegativity of $ u $ can be confirmed using Lemma $\ref{theo1}$
		if the norm of $\hat{u}_{-}$ is sufficiently small and verification succeeds with sufficient accuracy.
		 
	\end{rem}
	
	\begin{rem}
		Lemma $\ref{theo1}$ can be indeed regarded as a generalization of {\rm \cite[Theorem 2.1 and Corollary A.1]{tanaka2020numerical}} because Lemma $\ref{theo1}$ is formally obtained from {\rm \cite[Thorem 2.1]{tanaka2020numerical}} via the replacements $\lambda_1(\Omega) \rightarrow \lowerboundoflambda$ and the left-side $C_{p_{i}+1}(\Omega) \rightarrow C_{p_{i}+1}(\usupp)$.
		Actually, the replacement for $\lambda_1(\Omega)$ was already discussed in {\rm \cite[Corollary A.1]{tanaka2020numerical}}.
		However, the embedding constant $C_{p_{i}+1}(\Omega)$ was not replaced.
		Because the formula $ \left\|\hat{u}_{-}\right\|_{L^{p_{i}+1}}+C_{p_{i}+1}\rho$ in parentheses in \eqref{cond:theo1} can be very small as mentioned in Remark \ref{rem:ifrhosmall},
		replacing $C_{p_{i}+1}(\usupp)$ with its rough bound $C_{p_{i}+1}(=C_{p_{i}+1}(\Omega))$
		is rarely problematic.
		However, in several cases, roughly estimating the lower bound for $\lambda_1(\usupp)$ via $\lambda_1(\usupp) \geq \lambda_1(\Omega)$ cannot satisfy \eqref{cond:theo1}.
		For example, when $f(t)=\lambda (t - t^3)$, $\lambda$ should satisfy $\lambda \geq \lambda_1(\Omega)$ to admit a positive solution.
	\end{rem}
		
	By applying the estimations in Sections \ref{sec:lowerbound} and \ref{sec:embedding} to Lemma \ref{theo1}, we have the following theorem. The constants in this theorem can be computed explicitly using formulas provided before.
	
	\begin{theo}\label{theo:main}
		Suppose that a solution $u \in \Honezero$ of \eqref{main:fpro} exists in $ \ol{B}(\hat{u},\rho) $, given
		some approximation $ \hat{u} \in \Honezero$ and radius $\rho>0$.
		Moreover, we assume \eqref{ineq:uplus}, given $ q \in [2,p^*+1) $ and $m>0$.
		Then, we define $\mathcal C_1=\mathcal C_1(N,f,\hat{u},\rho,m)$ and $\mathcal C_2=\mathcal C_2(N,f,\hat{u},m)$ as
		\begin{align*}
			\mathcal C_1 &:=\displaystyle \sum_{i=1}^{n}a_i | D(m) |^{\frac{2}{N}+\frac{2}{p_i+1}-1}T_{i}^2
			\left( \left\|\hat{u}_{-}\right\|_{L^{p_{i}+1}}+| \Omega |^{\frac{1}{N}+\frac{1}{p_i+1}-\frac{1}{2}}T_{i}\rho\right)^{p_{i}-1},\\
			\mathcal C_2 &:= 1 - \f{\lambda}{A_{1,N}} |D(m)|^{\frac{2}{N}},
		\end{align*}
		where $T_{i} := T_{p_i+1,N}$ is defined by \eqref{eq:tp} and $A_{1,N}$ is the constant of \eqref{eq:A1n}.
		If $\mathcal C_1 < \mathcal C_2$, $u$ is nonnegative.
	\end{theo}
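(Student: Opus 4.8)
The plan is to apply Lemma~\ref{theo1} directly, substituting in the explicit bounds for the volume, the eigenvalue, and the embedding constants that were established in the preceding sections. The essential observation is that Theorem~\ref{theo:main} is nothing more than Lemma~\ref{theo1} with every implicitly-defined quantity ($\lowerboundoflambda$, $C_{p_i+1}(\usupp)$, and the two occurrences of $C_{p_i+1}=C_{p_i+1}(\Omega)$) replaced by a computable upper or lower bound. So the proof is a matter of chaining the earlier estimates correctly and checking that the direction of each inequality is preserved when it is plugged into \eqref{cond:theo1}.

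First I would invoke Lemma~\ref{lem:dm}: assumption \eqref{ineq:uplus} with the given $q$ and $m$ yields $|\usupp| \le |D(m)|$. Next, I would feed this volume bound into the Rayleigh--Faber--Krahn estimate of Theorem~\ref{theo:RFK} (Fact~\ref{fact:lowerbound} with $k=1$) to get
\begin{align*}
\lambda_1(\usupp) \;\ge\; A_{1,N}\,|\usupp|^{-\frac{2}{N}} \;\ge\; A_{1,N}\,|D(m)|^{-\frac{2}{N}} \;=:\; \lowerboundoflambda,
\end{align*}
where the interpretation $\lambda_1(\usupp)=\infty$ when $\text{int}(\usupp)=\emptyset$ makes this trivially valid in the degenerate case. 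Then I would apply Theorem~\ref{sitate} twice: once on the subdomain $\text{int}(\usupp)$, using $|\usupp|\le|D(m)|$ and the monotonicity of $|\cdot|^{\frac{1}{N}+\frac{1}{p_i+1}-\frac{1}{2}}$ in the relevant exponent range (the exponent is nonpositive for the subcritical $p_i$, so a smaller volume gives a \emph{larger} constant bound --- this is the one sign-check that needs care), to bound $C_{p_i+1}(\usupp) \le |D(m)|^{\frac{1}{N}+\frac{1}{p_i+1}-\frac{1}{2}} T_i$; and once on $\Omega$ itself to bound the constant $C_{p_i+1}=C_{p_i+1}(\Omega) \le |\Omega|^{\frac{1}{N}+\frac{1}{p_i+1}-\frac{1}{2}} T_i$ appearing inside the parentheses of \eqref{cond:theo1}.

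With these substitutions, the hypothesis $\lambda < \lowerboundoflambda$ of Lemma~\ref{theo1} becomes $\lambda < A_{1,N}|D(m)|^{-\frac{2}{N}}$, i.e. $\mathcal{C}_2 = 1 - \lambda/(A_{1,N}) |D(m)|^{\frac{2}{N}} > 0$, and moreover $\lambda/\lowerboundoflambda \le (\lambda/A_{1,N})|D(m)|^{\frac{2}{N}}$ when $\lambda\ge 0$ (and the inequality only helps when $\lambda<0$, since then $\lambda/\lowerboundoflambda<0$ while the bound could be taken as $0$; in either case $1 - \lambda/\lowerboundoflambda \ge \mathcal{C}_2$). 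The left-hand side of \eqref{cond:theo1}, after the above constant bounds, is at most $\mathcal{C}_1$. Hence $\mathcal{C}_1 < \mathcal{C}_2$ implies
\begin{align*}
\sum_{i=1}^n a_i C_{p_i+1}(\usupp)^2\left(\|\hat{u}_-\|_{L^{p_i+1}} + C_{p_i+1}\rho\right)^{p_i-1} \;\le\; \mathcal{C}_1 \;<\; \mathcal{C}_2 \;\le\; 1 - \frac{\lambda}{\lowerboundoflambda},
\end{align*}
so Lemma~\ref{theo1} applies and $u$ is nonnegative. The main obstacle --- really the only non-bookkeeping point --- is handling the degenerate case $\text{int}(\usupp)=\emptyset$ cleanly (where $\lowerboundoflambda$ may be taken arbitrarily large and \eqref{cond:theo1} is vacuous, as already noted in the text) and confirming that the exponent $\frac{1}{N}+\frac{1}{p_i+1}-\frac{1}{2}$ is indeed $\le 0$ for all admissible $p_i$ so that the volume-monotonicity step goes the right way; also one must note that when $\usupp$ is disconnected, each connected component has volume at most $|D(m)|$, so the componentwise inequalities of Lemma~\ref{theo1} are all subsumed by $\mathcal{C}_1<\mathcal{C}_2$.
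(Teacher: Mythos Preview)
Your overall strategy is exactly the paper's: substitute the explicit bounds from Lemma~\ref{lem:dm}, Theorem~\ref{theo:RFK}, and Theorem~\ref{sitate} into Lemma~\ref{theo1}. Two corrections, however.

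First, the sign of the exponent $\frac{1}{N}+\frac{1}{p_i+1}-\frac{1}{2}$ is \emph{positive}, not nonpositive, for subcritical $p_i$: when $N=2$ it equals $\frac{1}{p_i+1}>0$, and when $N\ge 3$ it lies in $(0,\tfrac{1}{N}]$ since $p_i+1<\tfrac{2N}{N-2}$. Your parenthetical reasoning (``smaller volume gives a larger constant bound'') would, if true, make the inequality go the wrong way: you would get $C_{p_i+1}(\usupp)\ge |D(m)|^{\cdots}T_i$, not $\le$. Fortunately the exponent is in fact positive, so $|\usupp|\le|D(m)|$ directly gives
\[
C_{p_i+1}(\usupp)\;\le\;|\usupp|^{\frac{1}{N}+\frac{1}{p_i+1}-\frac{1}{2}}T_i\;\le\;|D(m)|^{\frac{1}{N}+\frac{1}{p_i+1}-\frac{1}{2}}T_i,
\]
which is the bound you want. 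So your conclusion is right, but the justification you wrote contradicts itself.

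Second, once you \emph{define} $\lowerboundoflambda:=A_{1,N}|D(m)|^{-2/N}$ (which is a legitimate lower bound for $\lambda_1(\usupp)$ by Theorem~\ref{theo:RFK} and Lemma~\ref{lem:dm}), the identity $1-\lambda/\lowerboundoflambda=\mathcal C_2$ holds exactly, for any sign of $\lambda$; there is no need for the separate case analysis on $\lambda\ge 0$ versus $\lambda<0$.

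With these two fixes the argument is complete and coincides with the paper's intended proof.
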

	
	\begin{table}[t]
	\caption{Calculation examples of $\mathcal C_1$ and $\mathcal C_2$ for several nonlinearities $f$.}
	\label{table:concrete_nonlinearities}
	\begin{center}
		\renewcommand\arraystretch{1.8}
		\begin{tabular}{lll}
			\hline
			$N$&$\mathcal C_1$&$\mathcal C_2$\\
			\hline
			\hline
			
			\multicolumn{3}{l}{$f(t)=\lambda t + |t|^{p-1}t$ with $p \in (1, p^*)$}\\
			$2$&
			$|D(m)|^{\frac{2}{p+1}} T_{p+1,2}^2 \left( \left\|\hat{u}_{-}\right\|_{L^{p+1}}+|\Omega|^{\frac{1}{p+1}} T_{p+1,2}\rho\right)^{p-1}$&
			$1 - \frac{\lambda}{T_{p+1,2}} |D(m)|$ \\ 
			$3$&
			$|D(m)|^{\frac{2}{p+1}-\frac{1}{3}} T_{p+1,3}^2\left( \left\|\hat{u}_{-}\right\|_{L^{p+1}}+|\Omega|^{\frac{1}{p+1}-\frac{1}{6}} T_{p+1,3}\rho\right)^{p-1}$&
			$1 - \frac{\lambda}{T_{p+1,3}} |D(m)|^\frac{2}{3}$ \\
			
			\hline
			\multicolumn{3}{l}{$f(t)=\lambda (t - t^3)$ where $-f(-t) \leq \lambda t$ for all $t \geq 0$}\\
			$2$&
			$0$&
			$1 - \frac{\lambda}{T_{p+1,2}} |D(m)|$ \\ 
			$3$&
			$0$&
			$1 - \frac{\lambda}{T_{p+1,3}} |D(m)|^\frac{2}{3}$ \\
			
			\hline
			\multicolumn{3}{l}{$f(t)=\lambda (-at+(1+a) |t|t - t^3) $ where $-f(-t) \leq \lambda (1+a) |t|t$ for all $t \geq 0$}\\
			$2$&
			$ \lambda (1+a) |D(m)|^{\frac{2}{3}} T_{3,2}^2 \left( \left\|\hat{u}_{-}\right\|_{L^{3}}+T_{3,2}|\Omega|^{\frac{1}{3}}\rho\right)$&
			$1$ \\ 
			$3$&
			$ \lambda (1+a) |D(m)|^{\frac{1}{3}} T_{3,3}^2 \left( \left\|\hat{u}_{-}\right\|_{L^{3}}+T_{3,3}|\Omega|^{\frac{1}{6}} \rho\right)$&
			$1$ \\

			\hline
			\multicolumn{3}{l}{$ f(t)=\lambda (t+\lionsA |t|t - \lionsB t^3) $ where $-f(-t) \leq \lambda (t+\lionsA |t|t)$ for all $t \geq 0$}\\
			$2$&
			$ \lambda \lionsA |D(m)|^{\frac{2}{3}} T_{3,2}^2 \left( \left\|\hat{u}_{-}\right\|_{L^{3}}+T_{3,2}|\Omega|^{\frac{1}{3}}\rho\right)$&
			$1 - \frac{\lambda}{T_{p+1,2}} |D(m)|$ \\ 
			$3$&
			$ \lambda \lionsA |D(m)|^{\frac{1}{3}} T_{3,3}^2 \left( \left\|\hat{u}_{-}\right\|_{L^{3}}+T_{3,3}|\Omega|^{\frac{1}{6}} \rho\right)$&
			$1 - \frac{\lambda}{T_{p+1,3}} |D(m)|^\frac{2}{3}$ \\
			\hline
		\end{tabular}
	\end{center}
	\end{table}	
	Table \ref{table:concrete_nonlinearities} shows calculation examples of $\mathcal C_1$ and $\mathcal C_2$ for some concrete nonlinearities $f$,
	where we can use the estimations of $T_{p,N}$ in Tables \ref{table:A1n} and \ref{table:embedding}.
	For the first nonlinearity $f(t)=\lambda t + |t|^{p-1}t$ with a subcritical exponent $p \in (1, p^*)$,
	positive solutions are admitted only when $\lambda<\lambda_{1}(\Omega)$.
	Therefore, calculating $|D(m)|$ can be avoided if we can evaluate $\lambda_{1}(\Omega)$ with sufficient accuracy. 
	Note that $\lambda_{1}(\Omega)$ can be obtained analytically, such as when $\Omega$ is a hyperrectangle.
	When $\lambda=0$, $|D(m)|$ does not need to be calculated to derive $\mathcal C_2$.
	In cases where we do not require $|D(m)|$ to estimate $\mathcal C_2$, it is reasonable to replace $|D(m)|$ with $|\Omega|$ in calculating $\mathcal C_1$ to reduce calculation cost especially when $\rho$ is sufficiently small.
	
	The values of the gamma functions should be estimated explicitly to compute $T_{p+1,N}$.
	There are several toolboxes that enable us to calculate the gamma functions rigorously.
	Indeed, as mentioned in Sections \ref{sec:lowerbound} and \ref{sec:embedding}, kv library \cite{kashiwagikv} and INTLAB \cite{rump1999book} have such a rigorous function.
	Therefore, we are left to estimate upper bounds for $\left\|\hat{u}_{-}\right\|_{L^p}$ and $|D(m)|$, and a lower bound for $\Lpnorm{\hat{u}_{+}}{\Omegam}$.
	We describe the ways to calculate these bounds in the following subsections.
	For sufficient accuracy, we divide $\overline{\Omega}$ into a union of small subsets $\left\{K_{i}\right\}_{i=1}^{N_K}$ that satisfies $\cup_{i}\overline{K_{i}}=\overline{\Omega}$ and ${\rm measure\,} (\overline{K_{i}} \cap \overline{K_{j}})=0$ for $i\neq j$.
	We can obtain such a division ``for free'' such as when using finite element methods to compute $\hat{u}$.
	Otherwise, we should create a mesh $\left\{K_{i}\right\}_{i=1}^{N_K}$ of $\overline{\Omega}$ that satisfies the above property.
	When $\Omega$ is polygonal, a convenient way is to divide $\overline{\Omega}$ into a rectangular or triangular mesh.
	In preparation, we estimate a lower bound $m_i$ and an upper bound $M_i$ for $\hat{u}$ on each mesh $\overline{K_{i}}$, obtaining a closed interval $[m_i, M_i]$ that encloses $[\displaystyle \min_{x \in \overline{K_i}} \hat{u}(x), \displaystyle \max_{x \in \overline{K_i}} \hat{u}(x)]$.
	When we use a linear finite element approximation, $m_i$ and $M_i$ are the minimal and maximal values at vertexes for each $K_i$, respectively.
	We denote by $\overline{\Lambda}_m$ and $\underline{\Lambda}_m$ the sets of indices $i$ such that $M_i \leq m$ and $m_i \leq m$, respectively.

	\subsection{Upper bound of $\left\|\hat{u}_{-}\right\|_{L^p}$ for $p \in (1,\infty)$}
	\label{subsec:upperbounduminus}
	To calculate an upper bound of $\left\|\hat{u}_{-}\right\|_{L^p}$,
	we use the inequality
	\begin{align}
	\label{ineq:Lpuminus}
	\left\|\hat{u}_{-}\right\|_{L^{p}(\Omega)}
	= \left(\int_{\usupp} \hat{u}_{-}(x)^{p}dx\right)^{\frac{1}{p}}
	\displaystyle \leq \left( \max_{x\in \usupp} \hat{u}_{-}(x) \right) \left|\usupp\right|^{\frac{1}{p}}.
	\end{align}
	Upper bounds for $\displaystyle \max_{x\in \usupp}\hat{u}_{-}(x)$ and $\left|\usupp\right|$ can be obtained as follows:
	\begin{align*}
		&\displaystyle \max_{x\in \usupp} \hat{u}_{-}(x) \leq \displaystyle | \min_{i} [ \min \left\{ 0,m_i \right\} ] |,\\
		&\left|\usupp\right| \leq \displaystyle \sum_{m_i < 0} \left| K_i \right|.
	\end{align*}
	Thus, we can obtain the desired lower bound according to \eqref{ineq:Lpuminus}.


	\subsection{Upper bound of $|D(m)|$}
	\label{subsec:dm}
	Recall that $D(m):= \{ x \in \Omega : \hat{u}(x)\leq m\}$.
	When $D(m) \cap \overline{K_i} \neq \phi$, we see that $m_i \leq m$.
	Therefore, it follows from the definition of $\underline{\Lambda}_m$ that
	\begin{align*}
		D(m)\subset \bigcup_{i\in \underline{\Lambda}_m}K_{i}
	\text{~~~and~~~}
		|D(m)| \leq \displaystyle \sum_{i \in \underline{\Lambda}_m} |K_i|.
	\end{align*}
	
	

	
	
	\subsection{Lower bound of {$\Lpnorm{\hat{u}_{+}}{\Omegam}$ for $p \in (1,\infty)$}}
	It should be noted that $\Lpnorm{\hat{u}_{+}}{\Omegam}$ needs to be estimated from below,
	whereas upper bounds are required for the other constants.
	More precise estimation is required to satisfy \eqref{ineq:uplus} compared with $\left\|\hat{u}_{-}\right\|_{L^p}$.
	Therefore, we use the following estimation:
	\begin{align}
		&\Lpnorm{\hat{u}_{+}}{\Omegam}
		= \left(\int_{D(m)} \hat{u}_{+}(x)^{p}dx\right)^{\frac{1}{p}}\nonumber \\
		&\geq \left(\sum_{i\in\overline{\Lambda}_{m}}\int_{K_{i}} \hat{u}_{+}(x)^{p}dx\right)^{\frac{1}{p}}
		\geq \left(\sum_{i\in\overline{\Lambda}_{m}}|K_{i}|\left(\min_{x\in K_{i}}\hat{u}_{+}(x)\right)^{p}\right)^{\frac{1}{p}},
	\end{align}
	where $\displaystyle \bigcup_{i\in \overline{\Lambda}_m}K_{i}\subset D(m)$.
	By applying $\displaystyle \min_{x\in K_{i}}\hat{u}_{+}(x) \geq \max\{0,m_i\}$ to each $i \in \overline{\Lambda}_m$, estimation of $\Lpnorm{\hat{u}_{+}}{\Omegam}$ from below is completed.

	
	\section{Verification theory}
	In this section, we prepare verification theory to prove the existence of solutions $u$ of \eqref{main:fpro} satisfying \eqref{eq:h10error} and apply our method to verifying the positivity of $u$.
	We consider a square domain $ \Omega=(0,1)^2 $, and an L-shaped domain $ \Omega=(0,1)^2\backslash([0,0.5]\times[0.5,1]) $
	where $H^2$-regularity of solutions is lost due to the re-entrant corner at $(0.5,0.5)$.
	To obtain $H^1_0$-estimation \eqref{eq:h10error}, we use the following affine invariant Newton–Kantorovitch theorem. 
	We omit the notation $(\Omega)$ when expressing operator norms just to save space.
	For example, we abbreviate $||\cdot||_{{\cal L}(\Honezero,\Honezero)}$ as $||\cdot||_{{\cal L}(H^1_0,H^1_0)}$.
	We denote $B(\hat{u},r):= \{v\in \Honezero : \left\|v-\hat{u}\right\|_{H^1_0}<r\}$ and $\overline{B}(\hat{u},r):= \{v\in \Honezero : \left\|v-\hat{u}\right\|_{H^1_0}\leq r\}$ for $r>0$.
	
\begin{theo}[\cite{deuflhard1979affine}]
	\label{theo:nk}
	Let $ \hat{u} \in \Honezero$ be some approximate solution of $\mc F(u)=0$.
	Suppose that there exists some $\alpha>0$ satisfying
	\begin{align}
		\label{alpha}
		||{\mc F'_{\hat{u}}}^{-1}\mc F(\hat{u})||_{H^1_0}\le\alpha.
	\end{align}
	Moreover, suppose that there exists some $\beta>0$ satisfying
	\begin{align}
		\label{beta}
		||{\mc  F'_{\hat{u}}}^{-1}(\mc  F'_v-\mc  F'_w)||_{{\cal L}(H^1_0,H^1_0)}\le\beta||v-w||_{H^1_0}~~\mathrm{for~all~} v,w \in D,
	\end{align}
	where $D=B(\hat{u},2\alpha+\delta)$ is an open ball depending on the above value $\alpha>0$ for small $\delta>0$.
	If
	\begin{align}
		\label{alphabeta}
		\alpha\beta \le \frac{1}{2},
	\end{align}
	then there exists a solution $u \in \Honezero$ of $\mc F(u)=0$ in $\ball$ with
	\begin{align}
		\label{nkerror}
		\rho = \frac{1-\sqrt{1-2\alpha\beta}}{\beta}.
	\end{align}
	Furthermore, $ \mc  F'_{\varphi} $ is invertible for every $ \varphi \in \oballrho$, and the solution $u$ is unique in $\overline{B}(\hat{u},2\alpha)$.
\end{theo}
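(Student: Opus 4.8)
The plan is to reformulate $\mc F(u)=0$ as a fixed-point problem for the \emph{frozen-derivative} (simplified Newton) operator
\[
  T(v) := v - {\mc F'_{\hat u}}^{-1}\mc F(v),
\]
which is well defined on $D=\oball$ because \eqref{alpha} presupposes that $\mc F'_{\hat u}\in{\mc L}(\Honezero,\Hinv)$ is boundedly invertible, and whose fixed points are exactly the zeros of $\mc F$ since ${\mc F'_{\hat u}}^{-1}$ is injective. Freezing the derivative at $\hat u$ is what makes the \emph{affine-invariant} hypothesis \eqref{beta} usable: since $f\in C^1$ with the growth bound \eqref{asuume:df}, $\mc F$ and hence $T$ are continuously Fr\'echet differentiable on $D$ with $T'(v)={\mc F'_{\hat u}}^{-1}(\mc F'_{\hat u}-\mc F'_v)$, so \eqref{beta} taken with $w=\hat u$ gives the pointwise bound $\|T'(v)\|_{{\mc L}(H^1_0,H^1_0)}\le\beta\|v-\hat u\|_{H^1_0}$ for $v\in D$.

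Next I would run a quantitative Banach fixed-point argument on $\ball$. Integrating $T'$ along segments yields
\[
  \|T(v)-T(w)\|_{H^1_0}\le\tfrac{\beta}{2}\bigl(\|v-\hat u\|_{H^1_0}+\|w-\hat u\|_{H^1_0}\bigr)\|v-w\|_{H^1_0},\qquad v,w\in\ball,
\]
and in particular, with $w=\hat u$ and \eqref{alpha}, $\|T(v)-\hat u\|_{H^1_0}\le\tfrac{\beta}{2}\rho^2+\alpha$ for $v\in\ball$. The scalar inequality $\tfrac{\beta}{2}r^2-r+\alpha\le 0$ has solution set $[r_-,r_+]$ with $r_\pm=\beta^{-1}(1\pm\sqrt{1-2\alpha\beta})$, a nonempty interval precisely by \eqref{alphabeta}; the value $\rho=r_-$ of \eqref{nkerror} lies in it, so $T$ maps $\ball$ into itself. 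Writing $s:=1-2\alpha\beta\in[0,1]$, one has $\beta\rho=1-\sqrt s$ and $2\alpha\beta=1-s$, so $\rho\le 2\alpha$ follows from $s\le\sqrt s$; hence $\ball\subset\oball=D$ (which legitimizes the segment estimates) and $\rho<2\alpha+\delta$. Since moreover $\|T(v)-T(w)\|_{H^1_0}\le\beta\rho\,\|v-w\|_{H^1_0}$ on $\ball$ with $\beta\rho=1-\sqrt s\le 1$, the map $T$ is a strict contraction when $\alpha\beta<\tfrac12$, and the Banach fixed-point theorem produces a unique $u\in\ball$ with $\mc F(u)=0$, which is the sought solution satisfying \eqref{eq:h10error} for $\rho$ of \eqref{nkerror}.

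For the remaining two assertions: any solution $u'\in\overline{B}(\hat u,2\alpha)$ is a fixed point of $T$, and the segment estimate on the convex set $\overline{B}(\hat u,2\alpha)\subset D$ gives $\|u'-u\|_{H^1_0}\le 2\alpha\beta\,\|u'-u\|_{H^1_0}$, forcing $u'=u$ and hence uniqueness in $\overline{B}(\hat u,2\alpha)$; and for $\varphi\in\oballrho$ one factors $\mc F'_\varphi=\mc F'_{\hat u}\bigl(I-{\mc F'_{\hat u}}^{-1}(\mc F'_{\hat u}-\mc F'_\varphi)\bigr)$ and observes that $\|{\mc F'_{\hat u}}^{-1}(\mc F'_{\hat u}-\mc F'_\varphi)\|_{{\mc L}(H^1_0,H^1_0)}\le\beta\|\varphi-\hat u\|_{H^1_0}<\beta\rho\le 1$, so the bracketed operator, and therefore $\mc F'_\varphi$, is invertible by a Neumann series.

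The estimates themselves are routine; the genuinely delicate points are (i) choosing $T$ with derivative frozen at $\hat u$ so that the affine-invariant bound \eqref{beta}, rather than a Lipschitz bound on $\mc F'_v-\mc F'_w$ alone, is exactly what closes the argument; (ii) extracting the sharp constant $\tfrac{\beta}{2}$ in the self-mapping estimate via the fundamental theorem of calculus (a crude $\sup$-bound would give only $\alpha\beta\le\tfrac14$), together with verifying $\rho\le 2\alpha$ so that all segments stay inside $D$ where \eqref{beta} is available; and (iii) the borderline case $\alpha\beta=\tfrac12$, where $\beta\rho=1$ and $T$ is merely nonexpansive on $\ball$ — there existence is recovered by working on a slightly smaller ball of radius $r\in(r_-,r_+)$ and passing to the limit, or by the classical monotone analysis of the Newton–Kantorovich iterates as in \cite{deuflhard1979affine}.
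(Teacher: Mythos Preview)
The paper does not supply a proof of this theorem; it is quoted as an established result from \cite{deuflhard1979affine} and used as a black box to obtain the $H^1_0$-error estimate \eqref{eq:h10error}. Your fixed-point argument via the simplified Newton map $T(v)=v-{\mc F'_{\hat u}}^{-1}\mc F(v)$ is a standard and correct derivation in the strict case $\alpha\beta<\tfrac12$: the self-mapping and contraction estimates obtained by integrating $T'$ along segments, the identification of $\rho=r_-$, the check $\rho\le 2\alpha$ ensuring all segments remain in $D$, and the Neumann-series invertibility of $\mc F'_\varphi$ for $\varphi\in\oballrho$ are all sound.

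One small slip in your point (iii): in the borderline case $\alpha\beta=\tfrac12$ the discriminant vanishes and $r_-=r_+=1/\beta$, so the interval $(r_-,r_+)$ from which you propose to draw a radius $r$ is empty; that route does not salvage existence. Your alternative suggestion---the majorant-sequence analysis of the Newton iterates, as in the cited reference---does handle this case. Note also that your uniqueness inequality $\|u'-u\|_{H^1_0}\le 2\alpha\beta\,\|u'-u\|_{H^1_0}$ is vacuous when $2\alpha\beta=1$, so uniqueness in $\overline{B}(\hat u,2\alpha)$ in the borderline case likewise falls under the separate treatment you defer to \cite{deuflhard1979affine}.
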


We set $ \alpha $ and $ \beta $ to upper bounds of
\begin{align}
	\label{ineq:alphabeta}
	\|\mathcal{F}_{\hat{u}}^{\prime-1}\|_{\mathcal{L}\left(H^{-1}, H^1_0\right)}\|\mathcal{F}(\hat{u})\|_{H^{-1}} \text{~~~and~~~} 
	\|\mathcal{F}_{\hat{u}}^{\prime-1}\|_{\mathcal{L}\left(H^{-1}, H^1_0\right)} L,
\end{align}
respectively, then applying Theorem \ref{theo:nk} to prove the local existence of solutions.
Here, $ L $ is a positive number satisfying
\begin{align}
	\label{ineq:lip}
	\left\|F_{v}^{\prime}-F_{w}^{\prime}\right\|_{\mathcal{L}(H^1_0, H^{-1})} \leq L\|v-w\|_{H^1_0} \text {~~for all~~} v, w \in D.
\end{align}	
	We estimate the inverse norm $ \|\mathcal{F}_{\hat{u}}^{\prime-1}\|_{\mathcal{L}\left(H^{-1}, H^1_0\right)} $ using the method described in \cite{tanaka2014verified,liu2015framework} in a finite-dimensional subspace $ V_M \subset \Honezero$ specified later.

	\subsection{Square domain}
	For the square domain $\Omega = (0,1)^2$,
	we construct approximate solutions $\hat{u}$ with a Legendre polynomial basis.
	More precisely,	we construct $\hat{u}$ as
	\begin{align}
	\label{eq:Legendrebase}
	\displaystyle \hat{u}(x,y)=\sum_{i=1}^{M}\sum_{j=1}^{M}u_{i,j}\phi_{i}(x)\phi_{j}(y),~~u_{i,j}\in \mathbb{R},
	\end{align}
	where each $\phi_{n}$ ($ n=1,2,3,\cdots $) is defined by
	\begin{align}
	\displaystyle &\phi_{n}(x)=\frac{1}{n(n+1)}x(1-x)\frac{dQ_{n}}{dx}(x)\nonumber \\
	&\text{~~with~~}
	Q_{n}(x)=\displaystyle \frac{(-1)^{n}}{n!}\left(\frac{d}{dx}\right)^{n}x^{n}(1-x)^{n},~~n=1,2,3,\cdots.
	\end{align}
	The upper bound on $ \|\mathcal{F}(\hat{u})\|_{H^{-1}} $ is evaluated via $C_2 \|\mathcal{F}(\hat{u})\|_{L^2} $,
	where $C_2$ is the constant of embedding $ \Ltwo \hookrightarrow H^{-1} $, which in fact coincides with the constant of embedding $ \Honezero \hookrightarrow \Ltwo $ (see, for example, \cite{plum2008}).	
	This $ L^2$-norm is computed using a numerical integration method with a strict estimation of rounding errors using \cite{kashiwagikv}.
	For $\Omega = (0,1)^2$, as mentioned for \eqref{poincarebest}, the embedding constant $ C_2 $ is calculated as $ C_2=(2 \pi^2)^{-\f{1}{2}} \approx 0.2251$ with a strict estimation of rounding errors.
	
	We define a finite-dimensional subspace $ V_M~(\subset \Honezero) $ as the tensor product $V_M = \text{span\,}\{\phi_{1},\phi_{2},\cdots,\phi_{M}\} \otimes \text{span\,}\{\phi_{1},\phi_{2},\cdots,\phi_{M}\} $, then define the orthogonal projection $ P_M $ from $ \Honezero $ to $ V_M $ as
		\begin{align}
			\label{eq:PM}
			(v- P_M v,v_M)_{H^1_0} = 0 \text{~~~for all~} v \in \Honezero \text{~and~} v_M \in V_M.
		\end{align} 	
		We use \cite[Theorem 2.3]{kimura1999on} to obtain an explicit interpolation-error constant $ C_M $ satisfying
		\begin{align}
			\label{eq:interpolation}
			\left\|v-P_{M}v\right\|_{H^1_0}\leq C_{M}\left\|\Delta v\right\|_{L^{2}} \text{~~for~all~~} v \in \Honezero \cap H^2(\Omega),
		\end{align}
		then applying \cite{tanaka2014verified,liu2015framework} to estimate the inverse norm $ \|\mathcal{F}_{\hat{u}}^{\prime-1}\|_{\mathcal{L}\left(H^{-1}, H^1_0\right)} $.

	\subsection{L-shaped domain}	
	For the L-shaped domain $ \Omega=(0,1)^2\backslash([0,0.5]\times[0.5,1]) $, 
	we set $V_M$ to a finite element space of piecewise quadratic basis functions with the non-uniform triangulation displayed in Fig.~\ref{fig:Lshape},
	constructing approximate solutions $\hat{u} \in V_M$.
	Using \cite[Theorem 3.3]{liu2013verified}, we confirmed that $C_M = 0.011437$ satisfies
	\begin{align}
		\label{CNweek}
		\left\|u_{g}-P_{M}u_{g}\right\|_{H^1_0}\leq C_{M}\left\|g\right\|_{L^{2}}~~~{\rm for~all}~g\in L^2 (\Omega),
	\end{align}
	where $ P_M $ is the orthogonal projection from $ \Honezero $ to $ V_M $ defined in \eqref{eq:PM},
	and $u_g\in {H^1_0(\Omega )}$ is a unique solution of the weak formulation of the Poisson equation
	\begin{align}
		\label{poisson}
		\left(u_g, v\right)_{H^1_0}=\left(g,v\right)_{L^2}~~~{\rm for~all}~v\in H^1_0 (\Omega)
	\end{align}
	given $g\in L^{2}\left(\Omega\right)$.
	The upper bound on $ \|\mathcal{F}(\hat{u})\|_{H^{-1}} $ is evaluated using the Raviart-Thomas mixed finite element method (see, for example, \cite{takayasu2013verified}).
	We estimate the inverse norm $ \|\mathcal{F}_{\hat{u}}^{\prime-1}\|_{\mathcal{L}\left(H^{-1}, H^1_0\right)} $ using \eqref{CNweek} and the method described in \cite{tanaka2014verified,liu2015framework}.
	
	\begin{figure}[h]
			\begin{center}
				\includegraphics[height=65 mm]{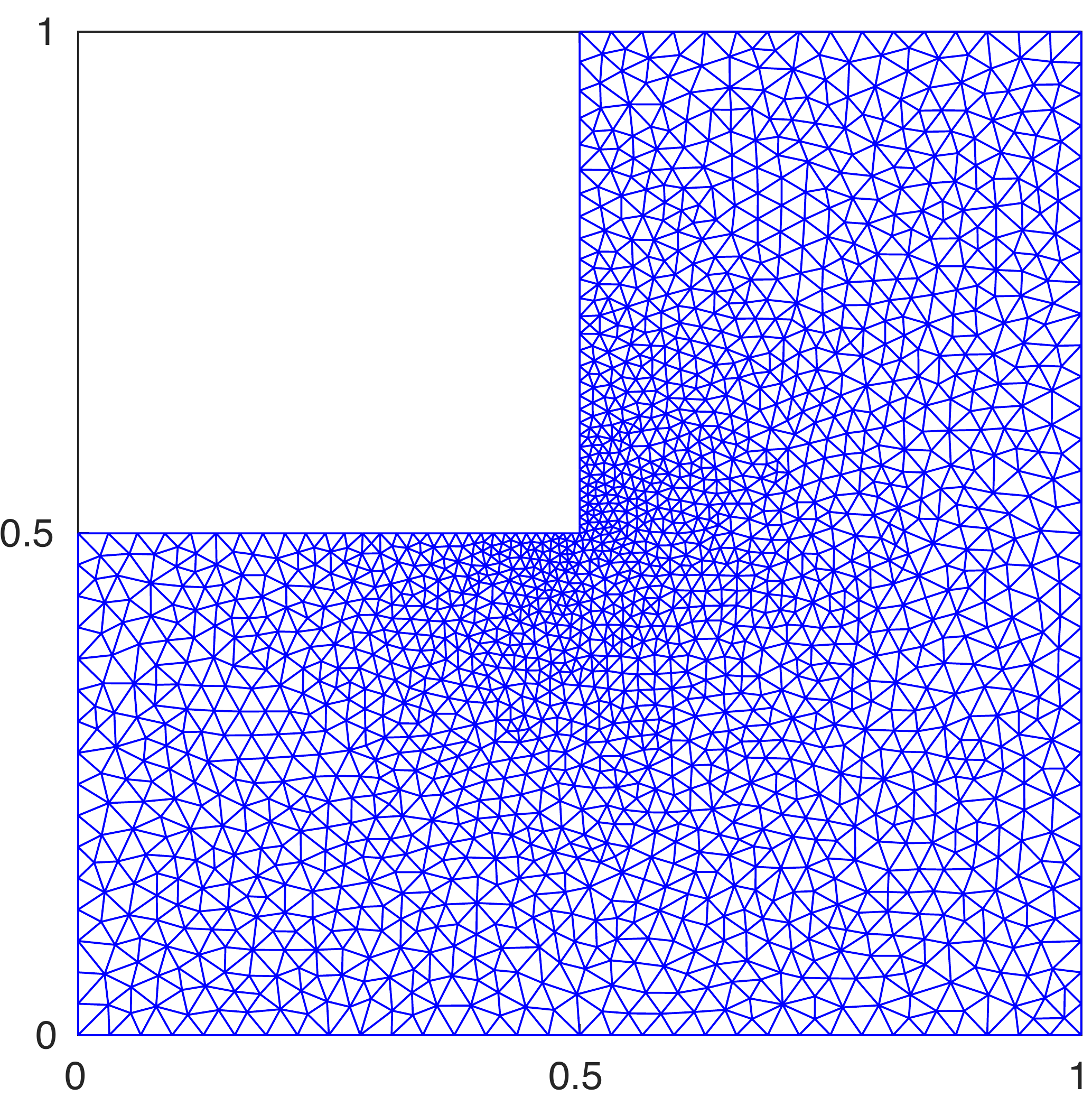}
			\end{center}
		\caption{A non-uniform mesh for the L-shaped domain $\Omega=(0,1)^2\backslash([0,0.5]\times[0.5,1])$.
		The number of triangular elements is 3754.
		The mesh size around the non-convex corner is about a quarter of the other corners.
		}
		\label{fig:Lshape}
	\end{figure}

	\begin{rem}
	Solutions of \eqref{main:fpro} on the L-shaped domain may lose $H^2$-regularity due to the re-entrant corner at $(x,y)=(0.5,0.5)$.
	Therefore, approximate solutions constructed with only a finite element basis may not lead to sufficiently small residuals.
	We can obtain smaller residuals by constructing approximates solutions with the sum of finite element basis functions and a singular function of the form $r^{\frac{2}{3}} \sin \left(\frac{2}{3}\theta\right)$, where $(r,\theta)$ is a polar coordinate system centered at the re-entrant corner.
	In {\rm \cite{kobayashi2009constructive}}, a priori error estimates for such a singular function are discussed.
	We also quote {\rm\cite[Example 7.7]{nakaoplumwatanabe2019numerical}}, an example of application to nonlinear elliptic problems.
	In this paper, we do not use the above singular function, but instead make the size of meshes around $(x,y)=(0.5,0.5)$ smaller than others to reduce residuals $($again, see, Fig.~{\rm\ref{fig:Lshape}}$)$.
	Once an $H^1_0$-error estimation of a solution is obtained, even if it is relatively rough, our method can be effective in proving the positivity of the solution.
	

	

	\end{rem}
	
	\section{Numerical Experiments}\label{sec:ex}
	In this section, we present numerical experiments in which the positivity of solutions of \eqref{main:fpro} satisfying \eqref{eq:h10error} are verified via the proposed method.
	All computations were implemented on a computer with 2.90 GHz Intel Xeon Platinum 8380H CPUs $\times$ 4, 3 TB RAM, and CentOS 8.2 using MATLAB 2019a with GCC Version 8.3.1.
		All rounding errors were strictly estimated using the toolboxes INTLAB version 11 \cite{rump1999book} and kv library version 0.4.49 \cite{kashiwagikv}.
	In the tables in this section, we use the following notation:
	\begin{itemize}
		\item[$\cdot$] $M_u$: number of Legendre basis functions on $\Omega = (0,1)^2$ with respect to $x$ and $y$ for constructing approximate solution $\hat{u} \in V_{M_u}$ (see \eqref{eq:Legendrebase})
		\item[$\cdot$] $M$: number of Legendre basis functions on $\Omega = (0,1)^2$ with respect to $x$ and $y$ for calculating $\|F'^{-1}_{\hat{u}}\|_{\mathcal{L}(H^{-1},H^1_0)}$ (see \eqref{eq:Legendrebase})
		\item[$\cdot$] $\|F'^{-1}_{\hat{u}}\|$: operator norm  $\|F'^{-1}_{\hat{u}}\|_{\mathcal{L}(H^{-1},H^1_0)}$ required in \eqref{ineq:alphabeta}
		\item[$\cdot$] $\|F(\hat{u})\|$: residual norm $\|F(\hat{u})\|_{H^{-1}}$ required in \eqref{ineq:alphabeta}
		\item[$\cdot$] $L$: upper bound for Lipschitz constant satisfying \eqref{ineq:lip}
		\item[$\cdot$] $\alpha$ and $\beta$: constants required in Theorem \ref{theo:nk} 
		\item[$\cdot$] $\rho$: error bound $\| u- \hat{u} \|_{H^1_0}$
		\item[$\cdot$] $m$: constant that determines $D(m)$; see Lemma \ref{lem:dm}
		\item[$\cdot$] $|\supp u_{-}|$: volume of support of $u_{-}$
		\item[$\cdot$] $\lambda_{1}(\supp u_{-})$: first eigenvalue of $-\Delta$ on interior of $\supp u_{-}$ defined by \eqref{eq:defi-diri-eigenvalue}
		\item[$\cdot$] $\mathcal{C}_1$ and $\mathcal{C}_2$: constants required in Theorem \ref{theo:main}
	\end{itemize}

	\subsection{Lane–Emden equation}
	As mentioned in the Introduction,
	positive solutions of the Lane–Emden equation with subcritical $p>1$,
	\begin{align}
	\left\{\begin{array}{l l}
	-\Delta u=u\left|u\right|^{p-1} &\mathrm{in~} \Omega,\\
	u=0 &\mathrm{on~} \partial\Omega
	\end{array}\right.\label{emden}
	\end{align}
	 have been studied from various points of view (again, see \cite{gidas1979symmetry,lin1994uniqueness,damascelli1999qualitative,gladiali2011bifurcation,de2019morse}).
	 This equation is covered by Theorem \ref{theo:main} (see the first row in Table \ref{table:concrete_nonlinearities}).
      The Lipschitz constant $ L $ satisfying \eqref{ineq:lip} can be estimated as
		\begin{align*}
		L \leq p(p-1) C_{p+1}^{3}\left(\|\hat{u}\|_{L^{p+1}}+C_{p+1} r\right)^{p-2},~~r=2\alpha+\delta~~{\rm for~small}~~\delta>0
		\end{align*}
      via a simple calculation from the definition, where we set $ r $ to be the next floating-point number after $ 2\alpha $.	 
      We refer to \cite[Section 4]{tanaka2020numerical} as a numerical experiment for positive solutions of the Lane–Emden equation for $p=3, 5$ on $\Omega=(0,1)^2$,
      where the volume of $D(m)$ was roughly estimated as $|D(m)|\leq |\Omega|$.
      
      For the L-shaped domain $\Omega=(0,1)^2\backslash([0,0.5]\times[0.5,1])$,
      we constructed an approximate solution $ \hat{u} \in V_M $ of \eqref{emden} with $p=3$ using a piecewise quadratic basis, obtaining
      Fig.~\ref{fig:emden_L}.
      Table \ref{tab:emden_L} shows the verification result and confirms the positivity of the enclosed solution because $\mathcal{C}_1 \leq \mathcal{C}_2$. 
      	
	\begin{figure}[h]
			\begin{center}
				\includegraphics[height=60 mm]{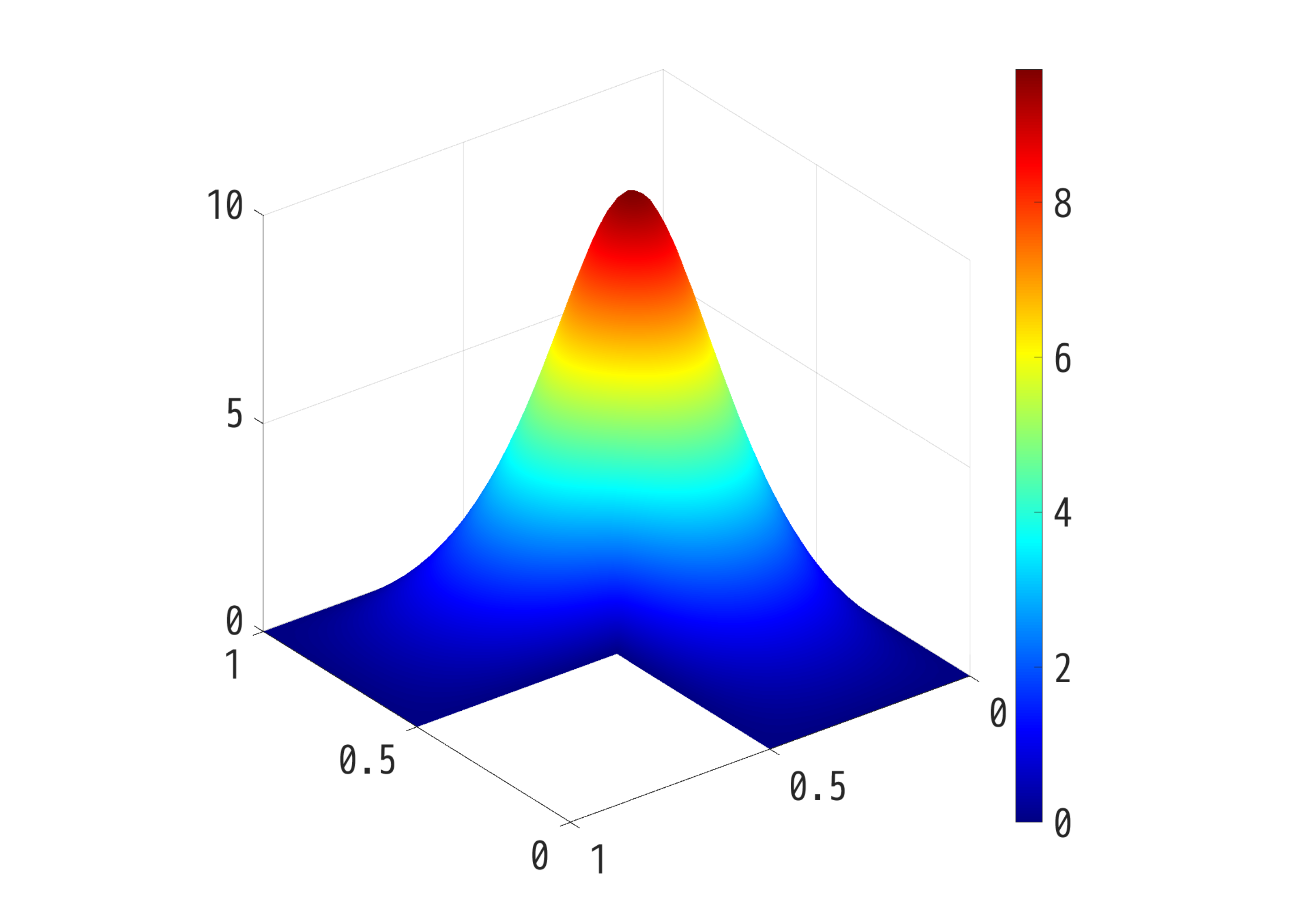}
			\end{center}
		\caption{An approximate solution of \eqref{emden} for $p=3$ on $\Omega=(0,1)^2\backslash([0,0.5]\times[0.5,1])$.}
		\label{fig:emden_L}
	\end{figure} 
	
	\begin{table}[h]
	\caption{Verification results for \eqref{emden} for $p=3$ on $\Omega=(0,1)^2\backslash([0,0.5]\times[0.5,1])$.
	The values in rows from $\|F'^{-1}_{\hat{u}}\|$ to $\rho$ in row $\mathcal{C}_1$ represent strict upper bounds in decimal form.
	The value in row $|\supp u_{-}|$ represents strict lower bounds in decimal form.}
	\label{tab:emden_L}
	\begin{center}
		\renewcommand\arraystretch{1.3}
		\footnotesize
		\begin{tabular}{ll}
			\hline
			$\|F'^{-1}_{\hat{u}}\|$&$2.03035227$\\
			$ \|\mathcal{F}(\hat{u})\| $&$4.24332160\ten{-2}$\\
			$ L $&$7.11899016\ten{-1}$\\
			$\alpha$&$ 8.61543762\ten{-2}$\\
			$\beta$&$ 1.44540578$\\
			$\rho$&$ 1.03811119\ten{-1}$\\
			$m$&$ 2^{-4}$\\
			$|\supp u_{-}| \geq$ & $8.74678937\ten{-2}$\\
			$\mathcal{C}_1$ & $3.858891\ten{-3}$\\			
			$\mathcal{C}_2$ & $1$\\			
			\hline
		\end{tabular}
	\end{center}
	\end{table}
	
	\subsection{Allen–Cahn equation and Nagumo equation}
	In the next example, we consider the stationary problem of the Allen–Cahn equation
		\begin{align}
		\left\{\begin{array}{l l}
		-\Delta u=\lambda (u-u^3) &\mathrm{in~} \Omega,\\
		u=0 &\mathrm{on~} \partial\Omega
		\end{array}\right.\label{pro:allen}
		\end{align}
		with $\lambda>0$.
	This is regarded as a special case of the stationary problem of the Nagumo equation
	\begin{align}
		\left\{\begin{array}{l l}
			-\Delta u=\lambda u (1-u) (u-a)= \lambda(-au+(1+a)u^2-u^3) &\mathrm{in~} \Omega,\\
			u=0 &\mathrm{on~} \partial\Omega
		\end{array}\right.\label{pro:nagumo}
	\end{align}
	with $\lambda>0$ and $0<a<1$.
	By applying $u=(v+1)/2$ to \eqref{pro:nagumo} with $a=0.5$ and adjusting the value of $\lambda$, these equations become identical.
	The Lipschitz constants $ L $ satisfying \eqref{ineq:lip} for \eqref{pro:allen} and \eqref{pro:nagumo} were respectively estimated as
	\begin{align*}
		&L \leq 6 \lambda C_{4}^{3}\left(\|\hat{u}\|_{L^{4}}+C_{4} r\right),&~~r=2\alpha+\delta~~{\rm for~small}~~\delta>0,\\
		&L \leq \lambda  \left( 2 (1+a) C_{3}^{3} + 6  	C_{4}^{3}\left(\|\hat{u}\|_{L^{4}}+C_{4} r\right) \right),&~~r=2\alpha+\delta~~{\rm for~small}~~\delta>0,
	\end{align*}
	where we set $ r $ to be the next floating-point number after $ 2\alpha $.
	It should be again noted that, in the previous paper \cite{tanaka2020numerical}, another approach was used for \eqref{pro:allen} to confirm positivity,
	requiring the confirmation of the positivity of the minimal eigenvalue of a certain linearized operator around approximation $ \hat{u} $.
	Theorem \ref{theo:main} can be uniformly applied to the nonlinearities of the form \eqref{eq:polyf} (again, see Table \ref{table:concrete_nonlinearities}).
	
	
	For the square domain $\Omega=(0,1)^2$, we constructed approximate solutions $ \hat{u} $ using a Legendre polynomial basis.
	For \eqref{pro:allen} ($\lambda=100$, $400$, and $1600$), we obtained Fig.~\ref{fig3} and the verification results in Table \ref{tab3}.
	For \eqref{pro:nagumo} ($\lambda=400$, $a=1/4$), we found multiple solutions displayed in Fig.~\ref{fig:nagumo} and the verification results in Table \ref{tab:nagumo}.
	
	For the L-shaped domain $\Omega=(0,1)^2\backslash([0,0.5]\times[0.5,1])$,
	we constructed approximate solutions $ \hat{u} \in V_M $ using a piecewise quadratic basis, obtaining Fig.~\ref{fig:allen_nagumo_L} for \eqref{pro:allen} ($\lambda=100$) and \eqref{pro:nagumo} ($\lambda=180$, $a=1/64$), and the verification results in Table \ref{tab:allen_nagumo_L}.	
	In all cases, we confirmed $\mathcal{C}_1 \leq \mathcal{C}_2$ and thus the positivity of the enclosed solutions. 

	\newcommand{\sizee}{0.325\hsize}
	\begin{figure}[h]
		\begin{minipage}{\sizee}
			\begin{center}
				\includegraphics[height=33 mm]{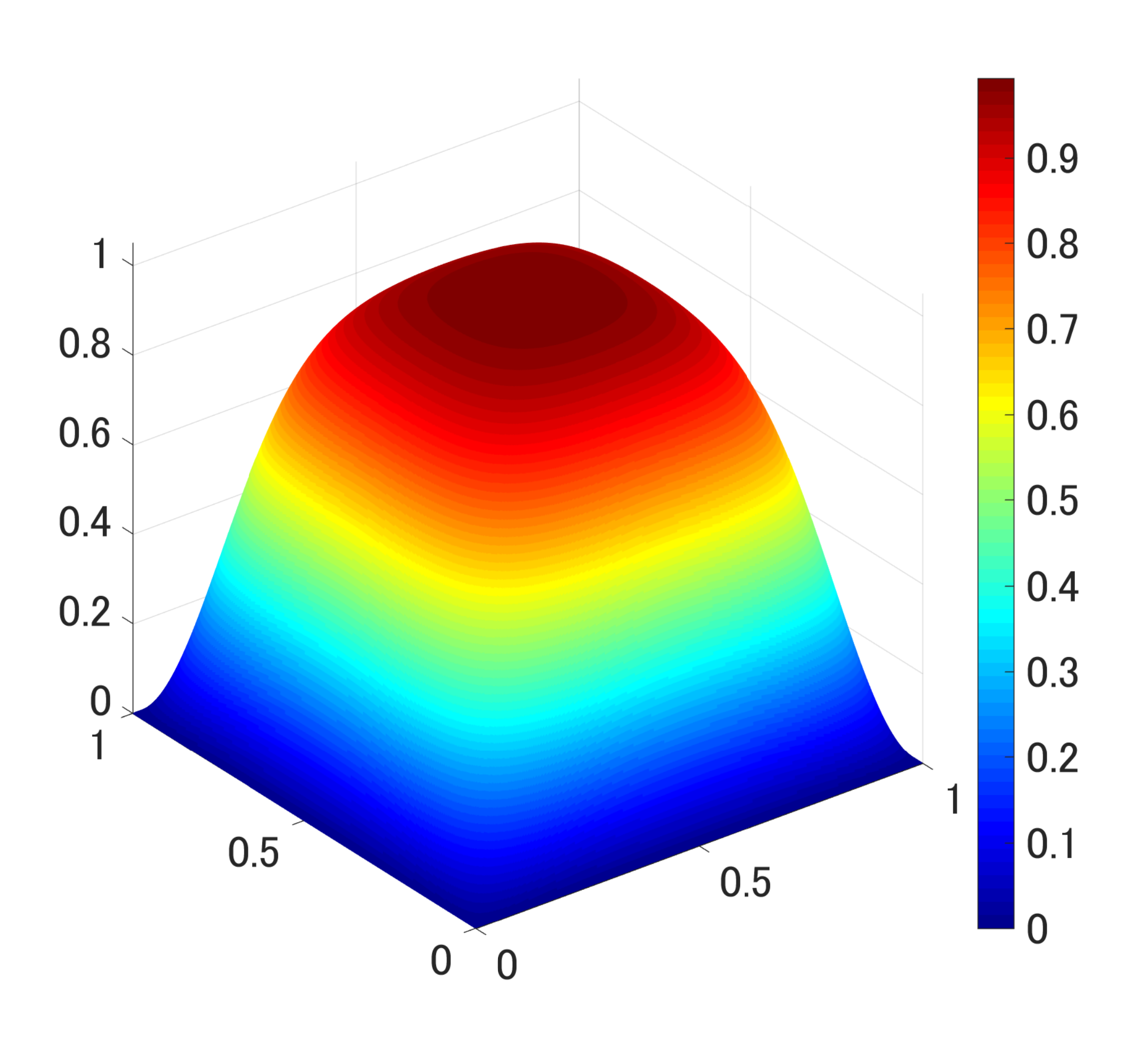}\\
				\footnotesize{$\lambda=100$}
			\end{center}
		\end{minipage}
		\begin{minipage}{\sizee}
			\begin{center}
				\includegraphics[height=33 mm]{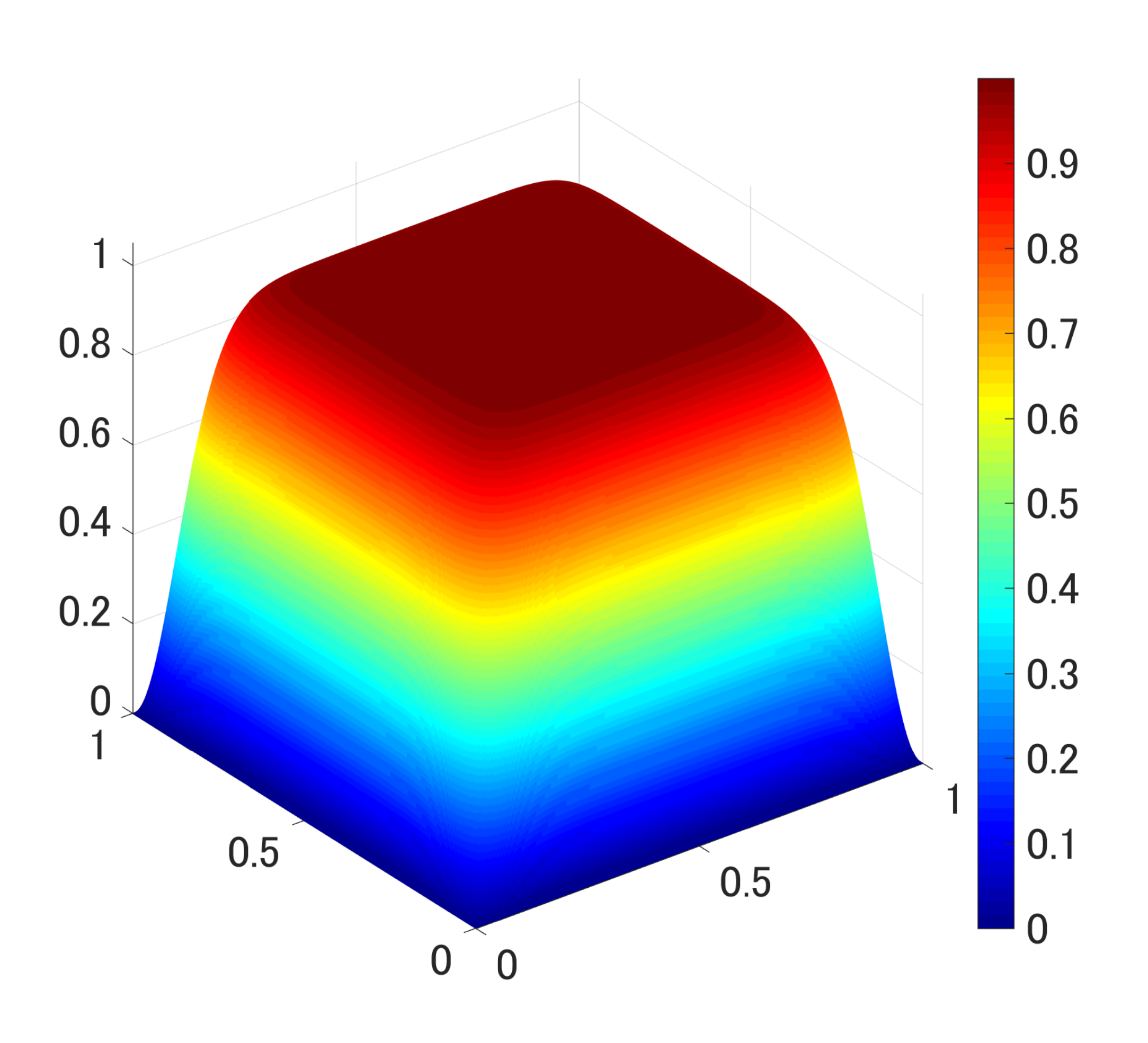}\\
				\footnotesize{$\lambda=400$}
			\end{center}
		\end{minipage}
		\begin{minipage}{\sizee}
			\begin{center}
				\includegraphics[height=33 mm]{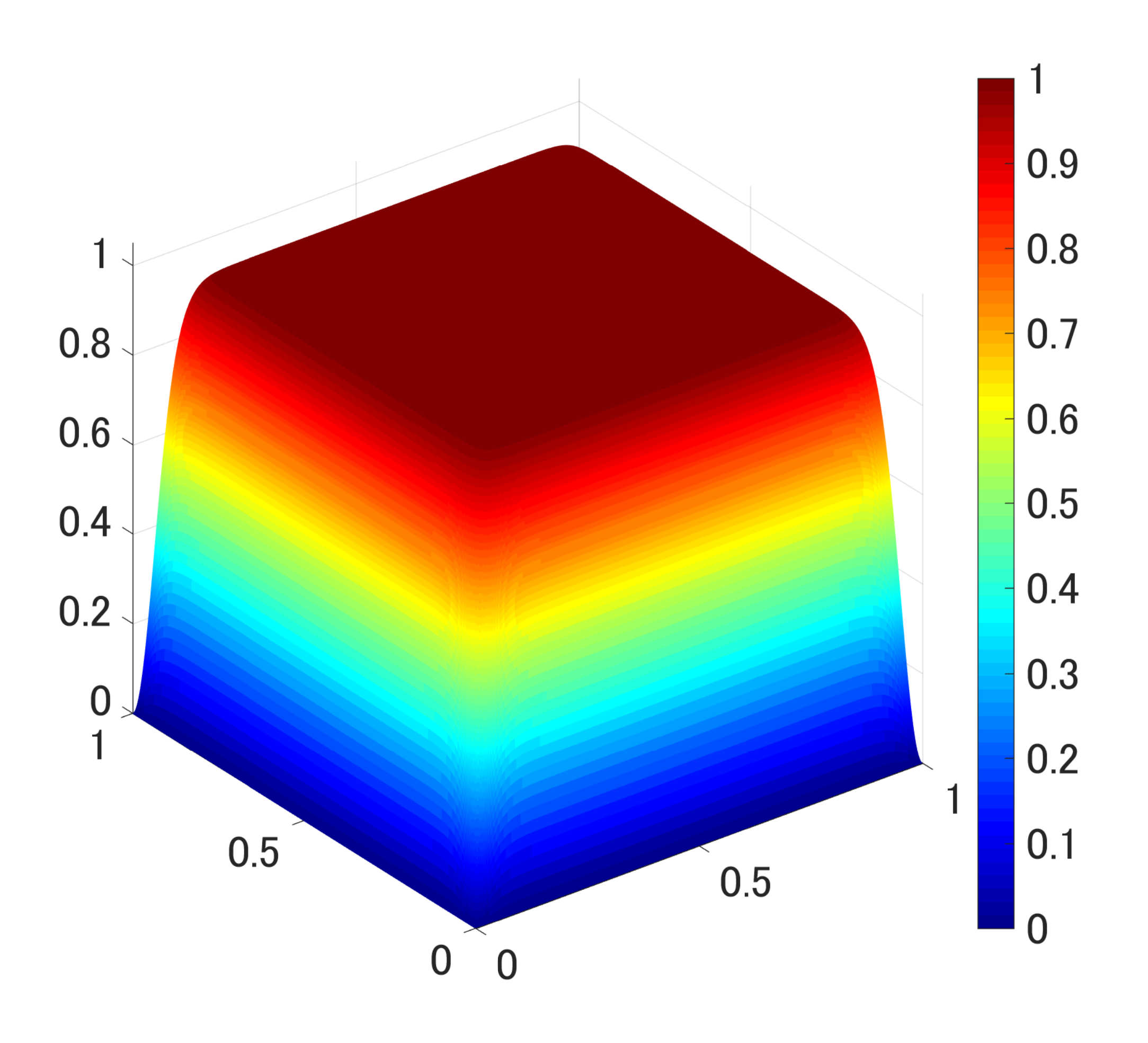}\\
				\footnotesize{$\lambda=1600$}
			\end{center}
		\end{minipage} 
		\caption{Approximate solutions of \eqref{pro:allen} on $\Omega=(0,1)^{2}$ for $\lambda=100$, $400$, and $1600$.}
		\label{fig3}
	\end{figure}

	\renewcommand{\sizee}{0.5\hsize}
	\begin{figure}[h]
		\begin{minipage}{\sizee}
			\begin{center}
				\includegraphics[height=40 mm]{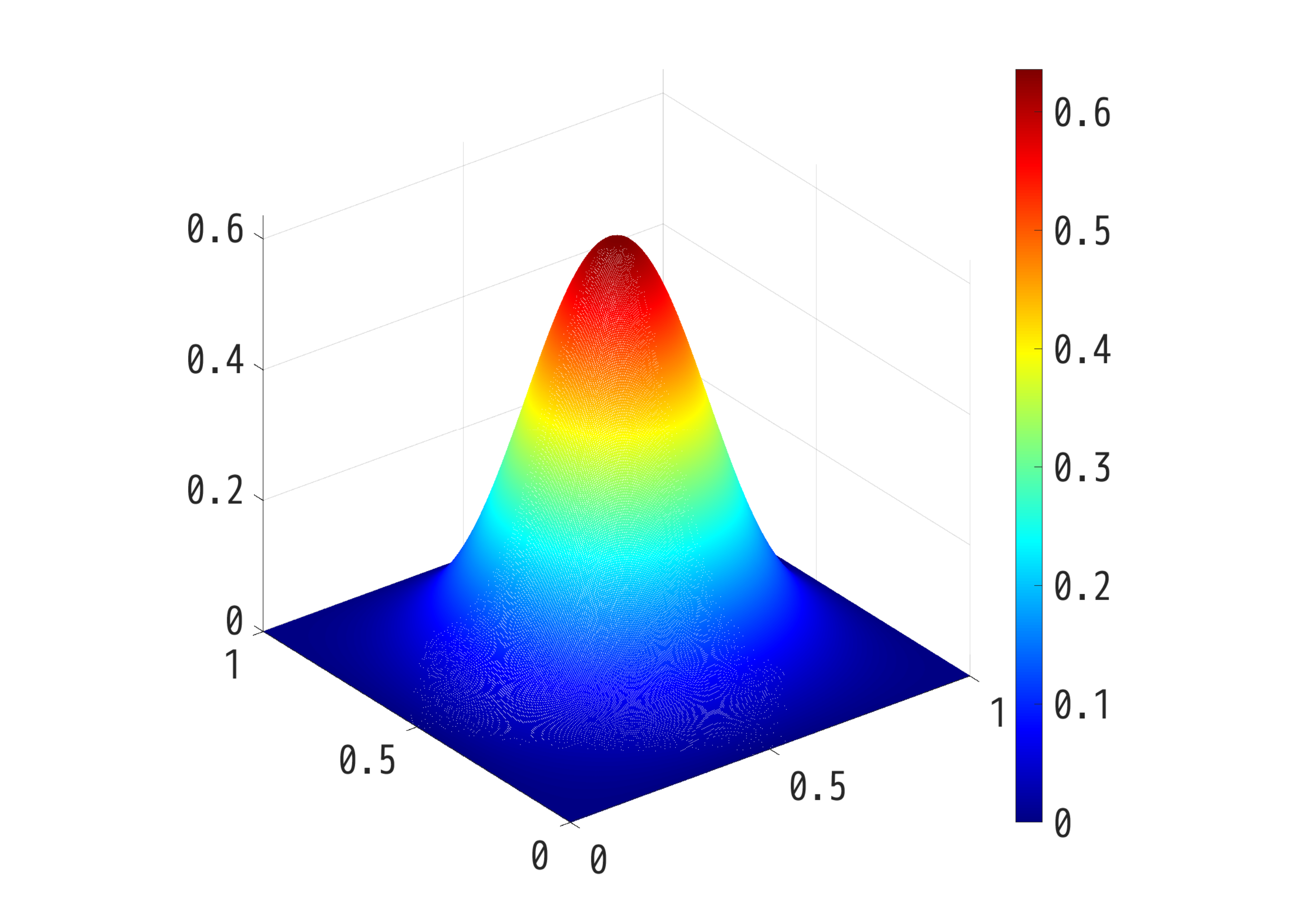}\\
				\footnotesize{Lower solution}
			\end{center}
		\end{minipage}
		\begin{minipage}{\sizee}
			\begin{center}
				\includegraphics[height=40 mm]{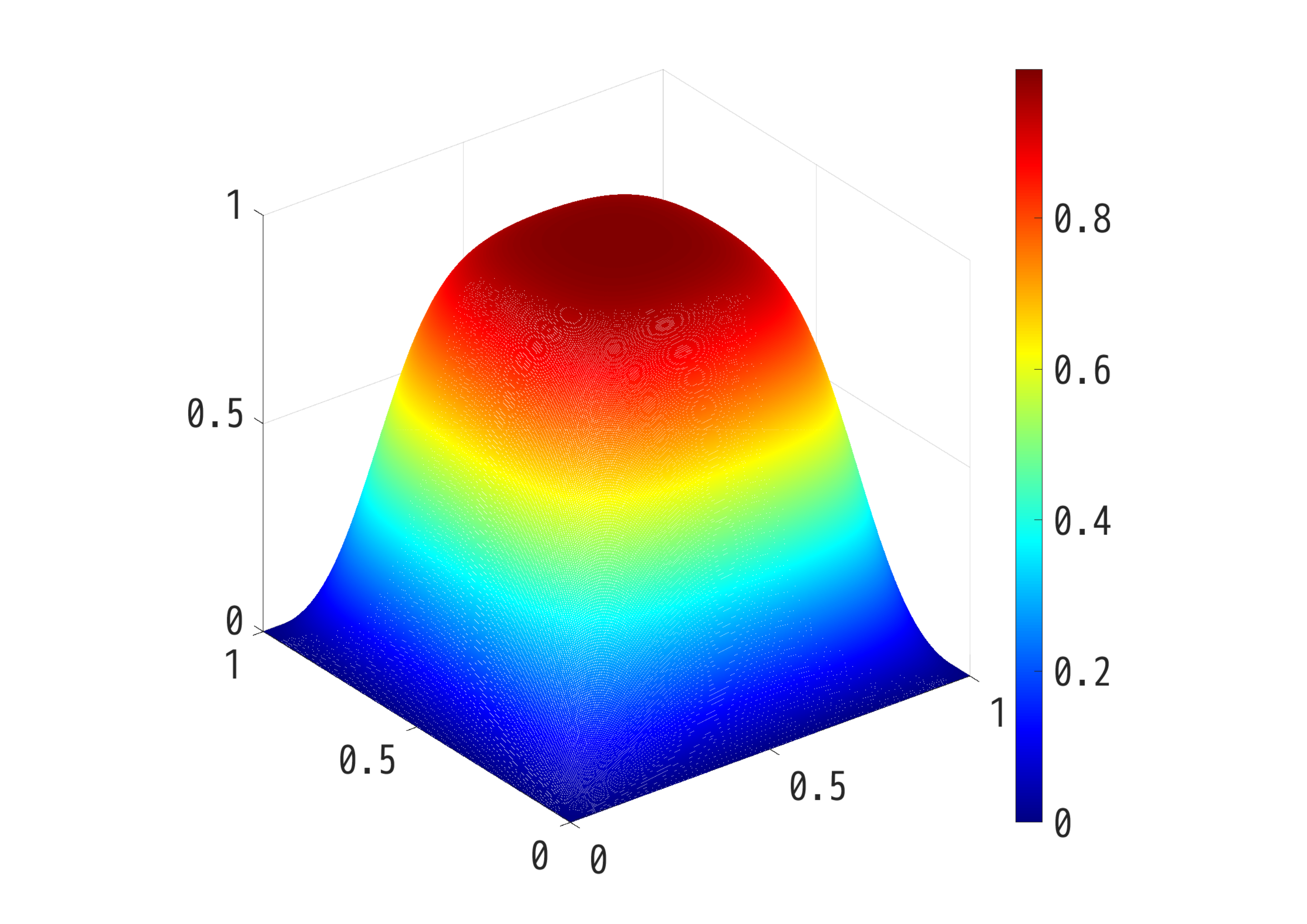}\\
				\footnotesize{Upper solution}
			\end{center}
		\end{minipage}
		\caption{Approximations of multiple solutions of \eqref{pro:nagumo} on $\Omega=(0,1)^{2}$ for $\lambda=400$ and $a=1/4$.}
		\label{fig:nagumo}
	\end{figure}
	
	\begin{table}[h]
	\caption{Verification results for \eqref{pro:allen} on $\Omega=(0,1)^{2}$ for $\lambda=100$, $400$, and $1600$.
	The values in rows from $ \|\mathcal{F}_{\hat{u}}^{\prime-1}\| $ to $\rho$ represent strict upper bounds in decimal form.
	The values in rows $\lambda_{1}(\supp u_{-})$ and $\mathcal{C}_2$ represent strict lower bounds in decimal form.}
	\label{tab3}
	\begin{center}
		\renewcommand\arraystretch{1.3}
		\footnotesize
		\begin{tabular}{llll}
			\hline
			$\lambda$& 100& 400&1600\\
			\hline
			\hline
			$M_u$& 40 & 40 & 60\\
			$M$& 40 & 40 & 60\\
			$ \|\mathcal{F}_{\hat{u}}^{\prime-1}\| $&$2.81346407$&$4.57367687$&$26.8136948$\\
			$ \|\mathcal{F}(\hat{u})\| $&$5.62890577\ten{-10}$&$2.15869521\ten{-6}$&$1.99428443\ten{-6}$\\
			$ L $&$3.05079436$&$5.02704780$&$7.57229901$\\
			$\alpha$&$ 1.58367241\ten{-9}$&$ 9.87317430\ten{-6}$&$5.34741338\ten{-5}$\\
			$\beta$&$ 8.58330029$&$ 22.9920922$&$2.03041315\ten{+2}$\\
			$\rho$&$ 1.76247606\ten{-9}$&$	1.78014183\ten{-5}$&$5.68280000\ten{-4}$\\
			$m$&$ 2^{-4}$&$	2^{-5}$&$2^{-5}$\\
			$\lambda_{1}(\supp u_{-}) \geq$ & $3.24128275\ten{+2}$&$1.26466832\ten{+3}$&$2.07075690\ten{+3}$\\
			$\mathcal{C}_1$ & $0$&$0$&$0$\\			
			$\mathcal{C}_2\geq$ & $6.914802\ten{-1}$&$6.837115\ten{-1}$&$2.273357\ten{-1}$\\			
			\hline
		\end{tabular}
	\end{center}
	\end{table}

\begin{table}[h]
	\caption{Verification results for \eqref{pro:nagumo} on $\Omega=(0,1)^{2}$ for $\lambda=400$, $a=1/4$.
		The values in rows from $ \|\mathcal{F}_{\hat{u}}^{\prime-1}\|$ to $\rho$ and in row $\mathcal{C}_1$ represent strict upper bounds in decimal form.
		The values in row $\lambda_{1}(\supp u_{-})$ represent strict lower bounds in decimal form.
	}
	\label{tab:nagumo}
	\begin{center}
		\renewcommand\arraystretch{1.3}
		\footnotesize
		\begin{tabular}{lll}
			\hline
			Solution& Lower& Upper\\
			\hline
			\hline
			$M_u$& 40 & 40\\
			$M$& 40 & 80\\
			$ \|\mathcal{F}_{\hat{u}}^{\prime-1}\| $&$14.7881126$&$20.9590503$\\
			$ \|\mathcal{F}(\hat{u})\| $&$2.89599073\ten{-8}$&$7.71958696\ten{-8}$\\
			$ L $&$45.4341203$&$81.6094700$\\
			$\alpha$&$ 4.28262367\ten{-7}$&$ 1.61795212\ten{-6}$\\
			$\beta$&$ 6.71884883\ten{2}$&$ 1.71045699\ten{3}$\\
			$\rho$&$ 4.28323999\ten{-7}$&$	1.62019712\ten{-6}$\\
			$m$&$ 2^{-4}$&$	2^{-4}$\\
			$|\supp u_{-}| \geq$ & $4.99000550\ten{-2}$ & $5.59616089\ten{-3}$\\
			$\mathcal{C}_1$&$2.910457\ten{-2}$&$ 5.157194\ten{-3}$\\
			$\mathcal{C}_2 $&$ 1$&$1$\\
			\hline
		\end{tabular}
	\end{center}
	
\end{table} 
	
	\renewcommand{\sizee}{0.5\hsize}
	\begin{figure}[h]
		\begin{minipage}{\sizee}
			\begin{center}
				\includegraphics[height=40 mm]{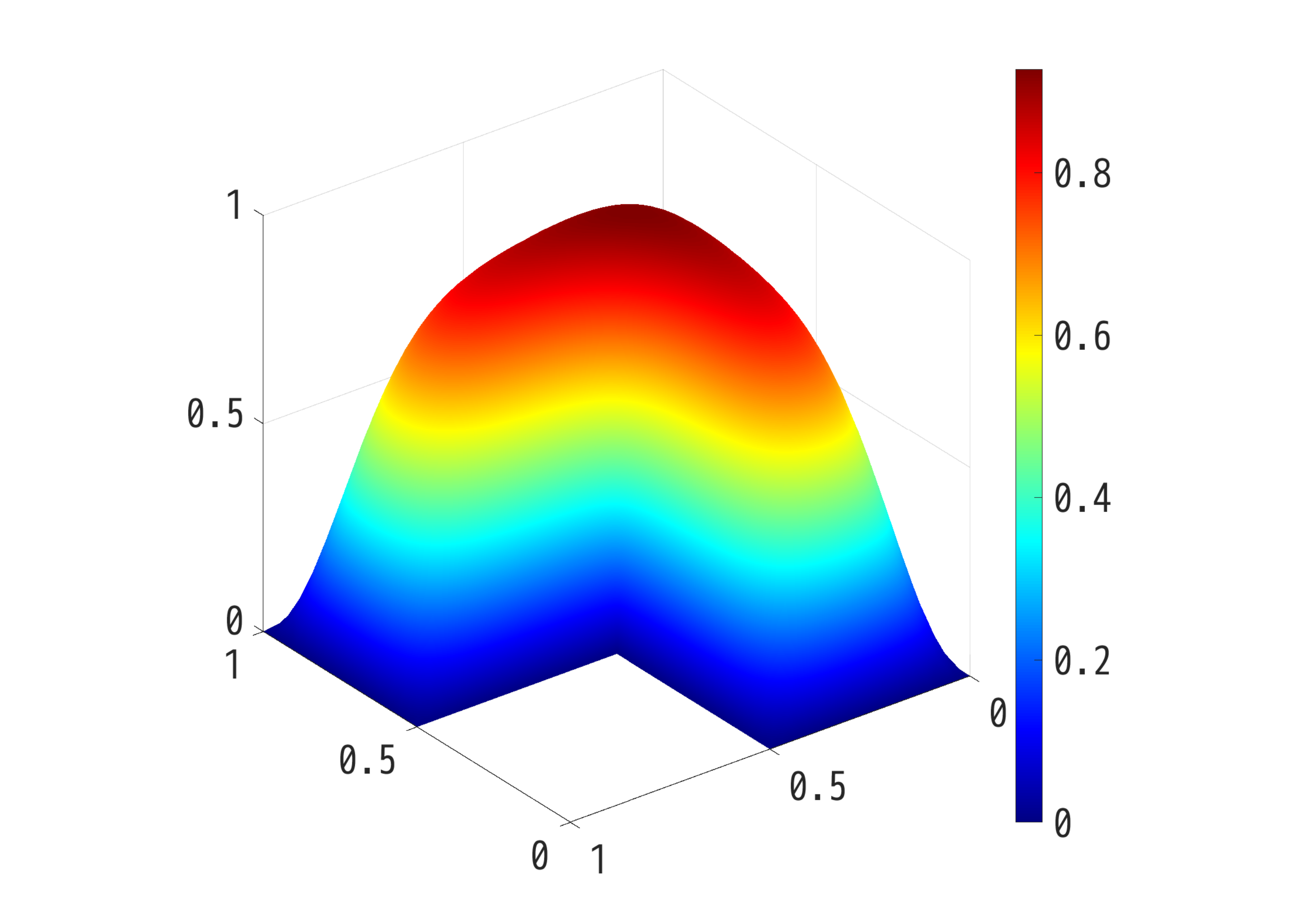}\\
				\footnotesize{\eqref{pro:allen} with $\lambda=100$}
			\end{center}
		\end{minipage}
		\begin{minipage}{\sizee}
			\begin{center}
				\includegraphics[height=40 mm]{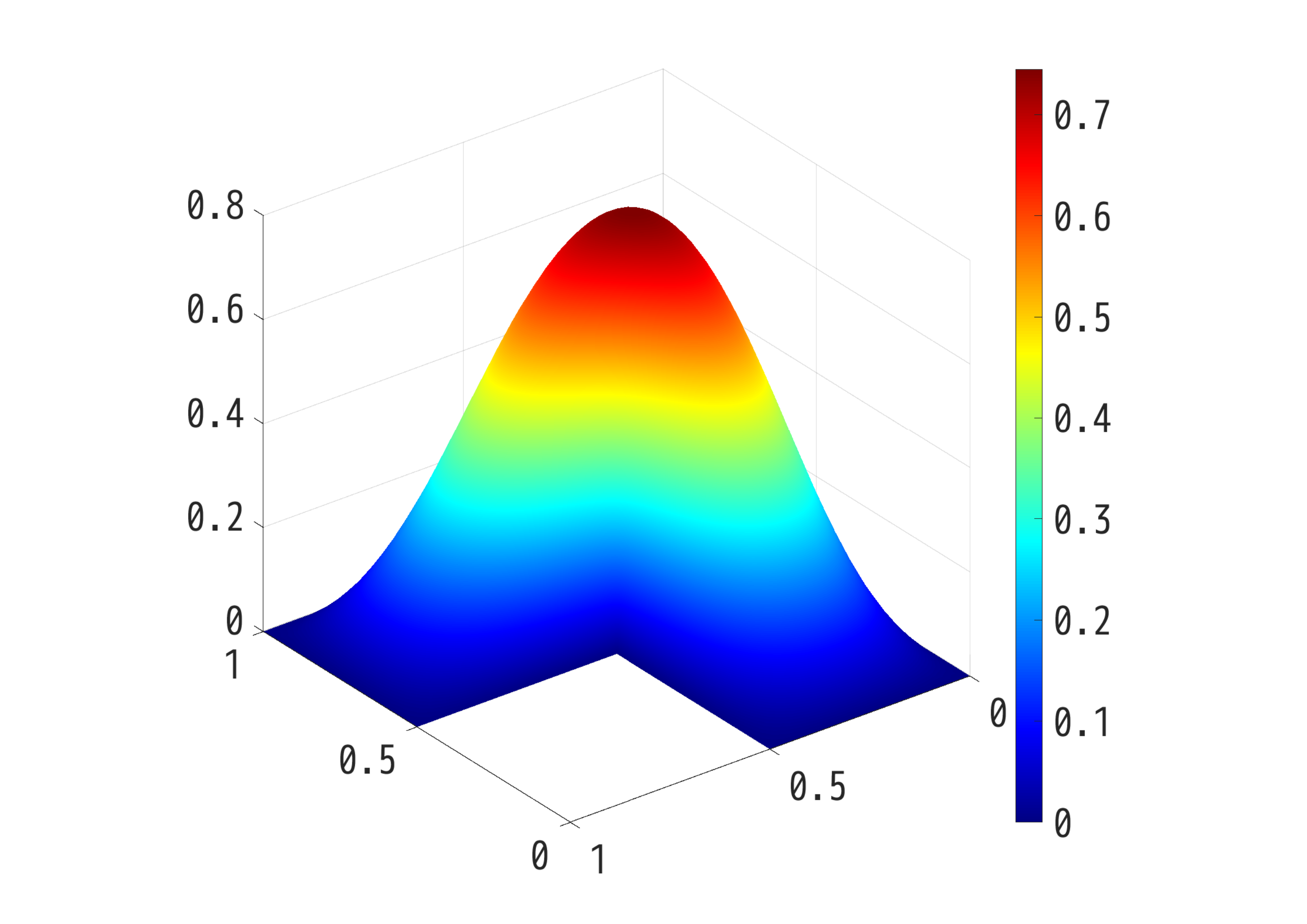}\\
				\footnotesize{\eqref{pro:nagumo} with $\lambda=180$, $a=1/64$}
			\end{center}
		\end{minipage}
		\caption{Approximate solutions of \eqref{pro:allen} and \eqref{pro:nagumo} on $\Omega=(0,1)^2\backslash([0,0.5]\times[0.5,1])$.}
		\label{fig:allen_nagumo_L}
	\end{figure}

%
	\begin{table}[h]
	\caption{Verification results for \eqref{pro:allen} and \eqref{pro:nagumo} on $\Omega=(0,1)^2\backslash([0,0.5]\times[0.5,1])$.
	The values in rows from $ \|\mathcal{F}_{\hat{u}}^{\prime-1}\| $ to $\rho$ represent strict upper bounds in decimal form.
	The values in row $\lambda_{1}(\supp u_{-})$ and $\mathcal{C}_2$ represent strict lower bounds in decimal form. The value of $\mathcal{C}_2$ for \eqref{pro:nagumo} is exactly 1.
}
	\label{tab:allen_nagumo_L}
	\begin{center}
		\renewcommand\arraystretch{1.3}
		\footnotesize
		\begin{tabular}{lll}
			\hline
			Problem& \eqref{pro:allen} ($\lambda=100$)& \eqref{pro:nagumo} ($\lambda=180$, $a=1/64$)\\
			\hline
			\hline
			$ \|\mathcal{F}_{\hat{u}}^{\prime-1}\| $&$1.45330579$ &$2.11075231$\\
			$ \|\mathcal{F}(\hat{u})\| $&$8.04913524\ten{-3}$ &$4.93049851\ten{-3}$\\
			$ L $&$10.1025501$ &$18.0081337$\\
			$\alpha$&$ 1.1697855\ten{-2}$ &$ 1.04070611\ten{-2}$\\
			$\beta$&$ 14.6820944$ &$ 38.0107096$\\
			$\rho$&$ 1.39688500\ten{-2}$ &$ 1.51162215\ten{-2}$\\
			$m$&$ 2^{-4}$ &$ 2^{-4}$\\
			$\lambda_{1}(\supp u_{-}) \geq$ & $2.12873779\ten{+3}$ & $8.70246701\ten{-2}$\\
			$\mathcal{C}_1$ & $0$ & $1.920365\ten{-1}$\\			
			$\mathcal{C}_2$ & $\geq 9.530238\ten{-1}$ & $=1$\\			
			\hline
		\end{tabular}
	\end{center}

	\end{table}

	\subsection{Elliptic equation with multiple positive solutions}
	For the last example, we consider the elliptic boundary value problem
		\begin{align}
		\left\{\begin{array}{l l}
		-\Delta u=\lambda (u+\lionsA u^2 - \lionsB u^3) &\mathrm{in~} \Omega,\\
		u=0 &\mathrm{on~} \partial\Omega,
		\end{array}\right.\label{pro:lions}
		\end{align}
	given $\lambda, \lionsA,\lionsB>0$.
	This problem has two positive solutions when $ \lambda^*< \lambda < \lambda_1(\Omega)$ for a certain $\lambda^*>0$ (see \cite{lions1982existence}).
	The Lipschitz constant $ L $ for this problem can be estimated as
		\begin{align*}
		L \leq \lambda  \left( 2 \lionsA C_{3}^{3} + 6 \lionsB C_{4}^{3}\left(\|\hat{u}\|_{L^{4}}+C_{4} r\right) \right),~~r=2\alpha+\delta~~{\rm for~small}~~\delta>0,
		\end{align*}
	where we again set $ r $ to be the next floating-point number of $ 2\alpha $.
	
	To prove the positivity of solutions of \eqref{pro:lions}, the previous method \cite{tanaka2020numerical} requires an $ L^{\infty} $-error estimation \eqref{eq:linferror}
	 and therefore is applicable to \eqref{pro:lions} only in the special cases where the solution has $H^2$-regularity and we can obtain an explicit bound for the embedding $ H^2(\Omega) \hookrightarrow L^{\infty}(\Omega) $ successfully.
	However, the proposed method is well applicable even to \eqref{pro:lions} without assuming $H^2$-regularity;
	see again the last case in Table \ref{table:concrete_nonlinearities}.

	For the square domain $\Omega=(0,1)^2$, we constructed approximations $\hat{u}$ of multiple positive solutions of \eqref{pro:lions} ($\lambda=10$, $\lionsA=5$, $\lionsB=1$) using a Legendre polynomial basis, obtaining the results in Fig.~\ref{fig:lions} and Table \ref{tab:lions}.
	For the L-shaped domain $\Omega=(0,1)^2\backslash([0,0.5]\times[0.5,1])$,
	we constructed multiple approximate solutions $ \hat{u} \in V_M $ of \eqref{pro:lions} ($\lambda=20$, $\lionsA=5$, $\lionsB=1$) using a piecewise quadratic basis, obtaining Fig.~\ref{fig:lions_L} and Table \ref{tab:lions_L}.
	Since $\mathcal{C}_1 \leq \mathcal{C}_2$ holds for all cases, the positivity of the solutions are confirmed.
	Because solutions on the L-shaped domain have not $H^2$-regularity and problem \eqref{pro:lions} corresponds to the lower left case in Table \ref{table:applicable},
	the previous method \cite{tanaka2020numerical} cannot be applied to solutions in Fig.~\ref{fig:lions_L}.
	Nevertheless, the proposed method succeeded in proving the positivity of both lower and upper solutions.
		
	\renewcommand{\sizee}{0.5\hsize}
	\begin{figure}[h]
		\begin{minipage}{\sizee}
			\begin{center}
				\includegraphics[height=50 mm]{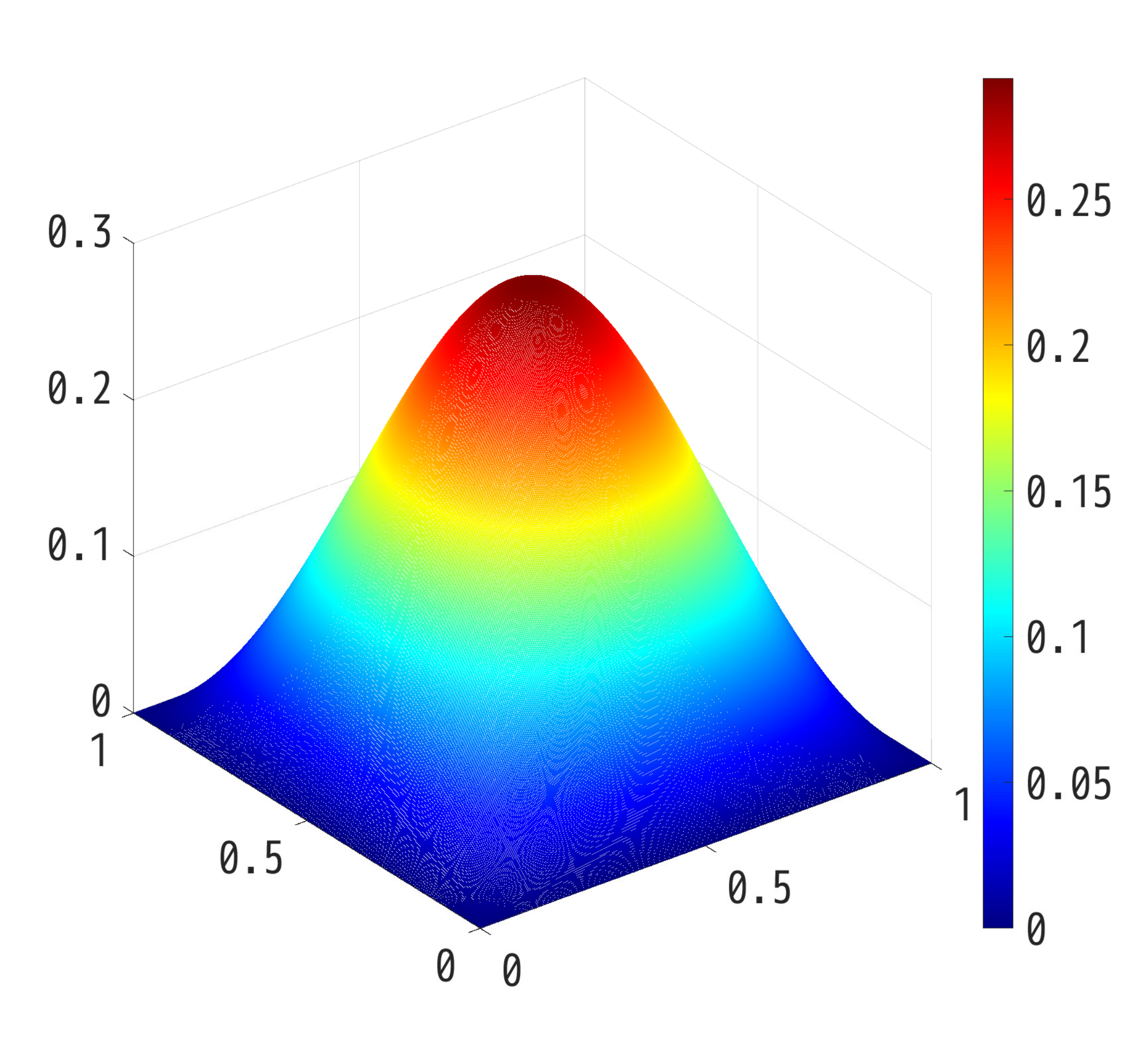}\\
				\footnotesize{Lower solution}
			\end{center}
			~
		\end{minipage}
		\begin{minipage}{\sizee}
			\begin{center}
				\includegraphics[height=50 mm]{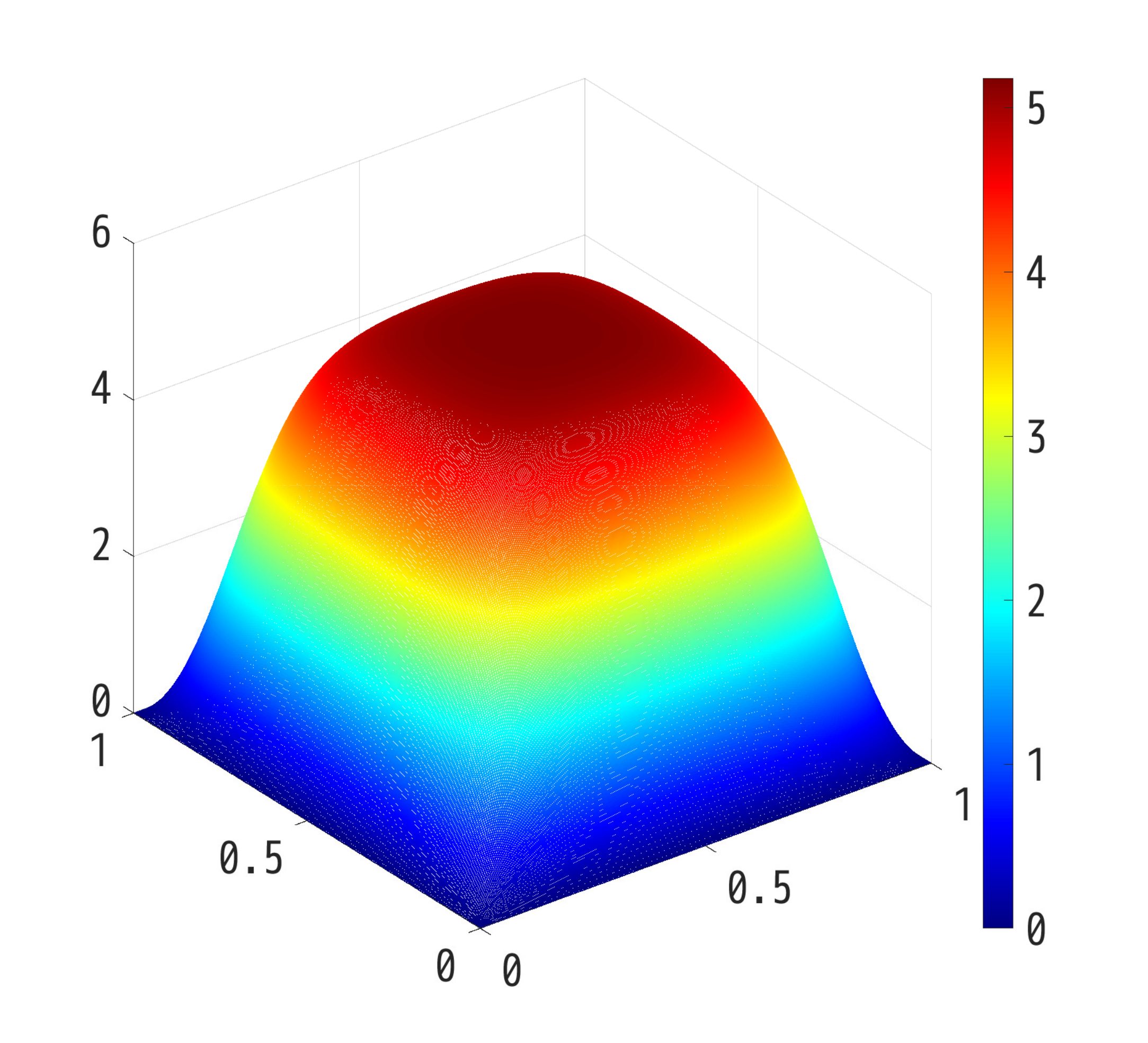}\\
				\footnotesize{Upper solution}
			\end{center}
			~
		\end{minipage}
		\caption{Approximations of multiple solutions of \eqref{pro:lions} on $\Omega=(0,1)^{2}$ when $\lambda=10$, $\lionsA=5$, $\lionsB=1$.}
		\label{fig:lions}
	\end{figure} 
	
	\begin{table}[h]
	\caption{Verification results for \eqref{pro:lions} on $\Omega=(0,1)^{2}$ when $\lambda=10$, $\lionsA=5$, $\lionsB=1$.
		The values in rows from $ \|\mathcal{F}_{\hat{u}}^{\prime-1}\| $ to $\rho$ and in row $\mathcal{C}_1$ represent strict upper bounds in decimal form.
		The values in rows $\lambda_{1}(\supp u_{-})$ and $\mathcal{C}_2$ represent strict lower bounds in decimal form.
		}
	\label{tab:lions}
	\begin{center}
		\renewcommand\arraystretch{1.3}
		\footnotesize
		\begin{tabular}{lll}
			\hline
			Solution& Lower& Upper\\
			\hline
			\hline
			$M_u$& 20 & 40\\
			$M$& 20 & 80\\
			$ \|\mathcal{F}_{\hat{u}}^{\prime-1}\| $&$2.11265000$&$13.7356291$\\
			$ \|\mathcal{F}(\hat{u})\| $&$2.87319486\ten{-9}$&$5.76722406\ten{-8}$\\
			$ L $&$2.52733502$&$2.77222751$\\
			$\alpha$&$ 6.07005511\ten{-9}$&$ 7.92164505\ten{-7}$\\
			$\beta$&$ 5.33937432$&$ 38.0782888$\\
			$\rho$&$ 6.29824611\ten{-9}$&$	1.32442923\ten{-6}$\\
			$m$&$ 2^{-4}$&$	2^{-4}$\\
			$|\supp u_{-}| \geq$ & $4.18472291\ten{-3}$ & $4.12368775\ten{-3}$\\
			$\lambda_{1}(\supp u_{-}) \geq$ & $47.9974691$&$1.02292545\ten{+2}$\\
			$\mathcal{C}_1$&$1.081214\ten{-4}$&$7.964752\ten{-4}$\\
			$\mathcal{C}_2 \geq$&$ 7.916557\ten{-1}$&$9.902241\ten{-1}$\\
			\hline
		\end{tabular}
	\end{center}
	\end{table}

	\begin{figure}[h]
		\begin{minipage}{\sizee}
			\begin{center}
				\includegraphics[height=50 mm]{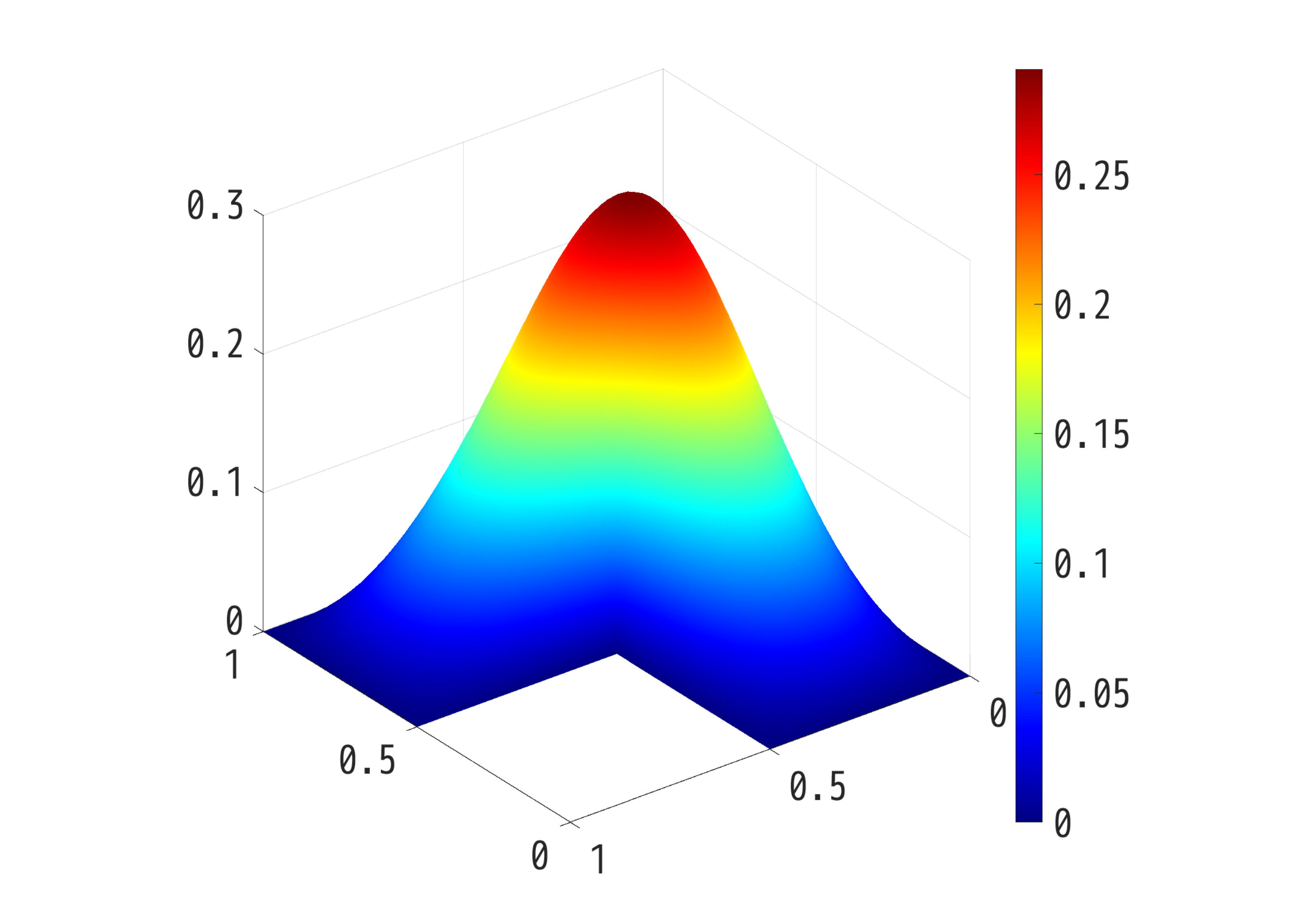}\\
				\footnotesize{Lower solution}
			\end{center}
			~
		\end{minipage}
		\begin{minipage}{\sizee}
			\begin{center}
				\includegraphics[height=50 mm]{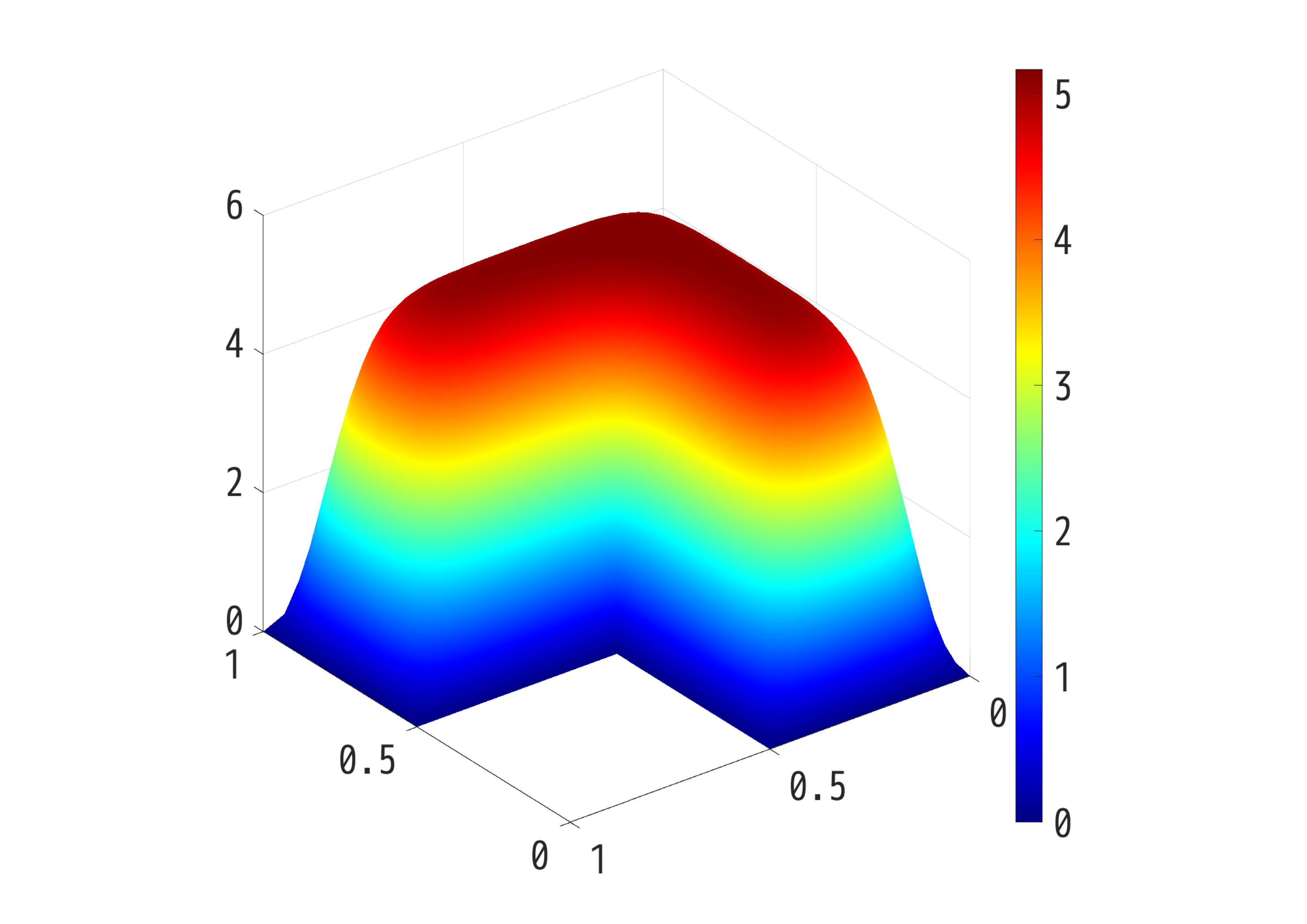}\\
				\footnotesize{Upper solution}
			\end{center}
			~
		\end{minipage}
		\caption{Approximations of multiple solutions of \eqref{pro:lions} on $\Omega=(0,1)^2\backslash([0,0.5]\times[0.5,1])$ when $\lambda=20$, $\lionsA=5$, $\lionsB=1$.}
		\label{fig:lions_L}
	\end{figure} 
	
		\begin{table}[h]
		\caption{Verification results for \eqref{pro:lions} on $\Omega=(0,1)^2\backslash([0,0.5]\times[0.5,1])$ when $\lambda=20$, $\lionsA=5$, $\lionsB=1$.
			The values in rows from $ \|\mathcal{F}_{\hat{u}}^{\prime-1}\| $ to $\rho$ and in row $\mathcal{C}_1$ represent strict upper bounds in decimal form.
			The values in rows $\lambda_{1}(\supp u_{-})$ and $\mathcal{C}_2$ represent strict lower bounds in decimal form.
			}
		\label{tab:lions_L}
		\begin{center}
			\renewcommand\arraystretch{1.3}
			\footnotesize
			\begin{tabular}{lll}
				\hline
				Solution& Lower& Upper\\
				\hline
				\hline
				$ \|\mathcal{F}_{\hat{u}}^{\prime-1}\| $&$3.93924470$&$4.17817762$\\
				$ \|\mathcal{F}(\hat{u})\| $&$1.75306001\ten{-3}$&$7.46385641\ten{-3}$\\
				$ L $&$3.78667858$&$3.83154427$\\
				$\alpha$&$ 6.90573233\ten{-3}$&$ 3.11853178\ten{-2}$\\
				$\beta$&$ 14.9166536$&$ 16.0088725$\\
				$\rho$&$ 7.38736922\ten{-3}$&$	6.0713451\ten{-2}$\\
				$m$&$ 2^{-4}$&$	2^{-2}$\\
				$|\supp u_{-}| \geq$ & $8.70246701\ten{-2}$ & $8.54343721\ten{-2}$\\
				$\lambda_{1}(\supp u_{-}) \geq$ & $56.5433134$&$6.85289494\ten{+4}$\\
				$\mathcal{C}_1$&$7.053746\ten{-2}$&$1.777890\ten{-2}$\\
				$\mathcal{C}_2 \geq$&$ 6.462889\ten{-1}$&$9.997082\ten{-1}$\\
				\hline
			\end{tabular}
		\end{center}
		\end{table} 

	\section{Conclusion}
		We proposed a unified a posteriori method for verifying the positivity of solutions $ u $ of elliptic boundary value problem \eqref{eq:mainpro} while assuming $ H^1_0 $-error estimation \eqref{eq:h10error} given some numerical approximation $ \hat{u} $ and an explicit error bound $ \rho $.
		By extending one of the approaches developed in \cite{tanaka2020numerical}, we designed a unified method with wide applicability.
		We described the way to obtain explicit values of several constants that the proposed method requires.
		We also presented numerical experiments to show the effectiveness of our method for three types of nonlinearities, including those to which the previous approach is not applicable.
\appendix
\renewcommand{\thetheo}{\Alph{section}.\arabic{theo}}
	\section{Estimates of the first positive zeros of the Bessel functions}
	\label{appendix:rfkineq}
    This section discusses rigorous estimates of the Bessel functions $j_{n,1}$ required to obtain the values displayed in Table \ref{table:A1n}.
	When $n$ is written in the form $n=k+0.5$ with an integer $k$,
	explicit formulas for $j_{n,1}$ can be obtained.
	Particularly, $j_{0.5,1}=\sqrt{2/(\pi x)} \sin x$; therefore, the first zero of this is $\pi$ (see, for example, \cite[Remark 1.2]{baricz2010generalized} and \cite[Section 10.16]{olver2010nist}).
	Moreover, we have $j_{1.5,1}=\sqrt{2/(\pi x)} \left( x^{-1} \sin x - \cos x \right)$, the zeros of which satisfy $x=\tan x$.
	The first zero of this function is enclosed by the function ``allsol'' packaged in the kv library \cite{kashiwagikv}, which enables us to obtain all zeros of the function in a given compact interval.
	We set the initial interval as $[\pi,1.5\pi]$, in which the equation has the first zero.
	
	When $n=0,1$, we calculated rigorous values of $j_{n,1}$ using the bisection method with computer assistance.
	For preparation, we first prove that there is no positive zero of $j_{0,1}$ and $j_{1,1}$ in the compact interval $[0,1]$ as follows:
	When $n$ is an integer, $j_{n,1}$ is given by
	\begin{align*}
	    j_{n,1}(x) = \frac{1}{\pi} \int_{0}^{\pi} \cos(x\sin t-nt) dt;
	\end{align*}
	see \cite[Section 10.9]{olver2010nist}.
	Let $a\geq 0$ and suppose $0 \leq x \leq a$ so that $0 \leq x \sin t \leq a $ ($0\leq t \leq \pi$).
	Let us write $A:=[0,a]$.
	Then, we have 
	\begin{align*}
	    j_{0,1}(x) &= \frac{1}{\pi} \int_{0}^{\pi} \cos(x\sin t) dt \in \cos A = [\cos a, 1].
	\end{align*}
	When $a \in [0,1] \subset [0,\pi/2)$, $\cos a$ is positive, and therefore, so is $j_{0,1}$.
	Next, we consider $j_{1,1}$, which satisfies $j_{1,1}(0)=0$.
	The first derivative of $j_{1,1}$ is given by
	\begin{align*}
	    \frac{d}{dx}j_{1,1}(x) = \frac{1}{\pi} \int_{0}^{\pi} \sin{t}\sin(t-x\sin t) dt.
	\end{align*}
    Hence, we have, for $a\in [0,\pi]$,
	\begin{align*}
	    \frac{d}{dx}j_{1,1}(x) \in \frac{1}{\pi} \int_{0}^{\pi} \sin{t}\sin(t-A) dt
	    = \frac{1}{4 \pi} \left ( 2 \pi \cos A + \sin A - \sin A \right)
	    =:B
	\end{align*}
	For all $t \in B$, we confirm $t \geq \frac{1}{4 \pi} \left ( 2 \pi \cos a - \sin a \right)$.
	This value is positive when $a \in [0,1] \subset [0,\tan^{-1}(2 \pi))$.
	Therefore, $j_{n,1}$ monotonically increases for $x \in [0,1] $.
	
	Using bisection steps, we rigorously computed the first positive zeros of $j_{0,1}$ and $j_{1,1}$.
	We first found a compact interval that includes the first positive zero of $j_{n,1}$ by searching the first interval $[\underline{x},\overline{x}]:=[1+(i-1)\varepsilon/2,1+(i-1)\varepsilon/2+\varepsilon]$ ($i=1,2,\cdots$) that satisfies $j_{i,1}(\underline{x})j_{i,1}(\overline{x})<0$.
	Then, starting from the center of $[\underline{x},\overline{x}]$, we repeated the bisection method until the desired precision was achieved.
	All rounding errors were strictly estimated using kv library version 0.4.49 \cite{kashiwagikv}.
\bibliographystyle{spmpsci}      
\bibliography{ref}   
\end{document}